\author{Benjamin Linowitz}
\address{Department of Mathematics\\University of Michigan\\Ann Arbor, MI 48109}
\email{linowitz@umich.edu}
\author{Matthew Stover}
\address{Department of Mathematics, Temple University, 1805 N.\ Broad Street, Philadelphia, PA 19122, USA}
\email{mstover@temple.edu}
\title{Parametrizing Shimura subvarieties of $\mathrm{A}_1$ Shimura varieties and related geometric problems}
\DeclareMathAlphabet{\curly}{U}{rsfs}{m}{n}
\DeclareMathOperator{\Ann}{Ann}
\DeclareMathOperator{\Aut}{Aut}
\DeclareMathOperator{\Ram}{Ram}
\DeclareMathOperator{\Gal}{Gal}
\DeclareMathOperator{\GL}{GL}
\DeclareMathOperator{\inv}{inv}
\DeclareMathOperator{\SL}{SL}
\newtheorem{thm}{Theorem}[section]
\newtheorem{cor}[thm]{Corollary}
\newtheorem{lem}[thm]{Lemma}
\theoremstyle{definition}
\newtheorem*{rmk}{Remark}
\theoremstyle{remark}
\newtheorem{proposition}{Proposition}[section]
\def\1{\mathbf{1}}
\theoremstyle{plain}
\theoremstyle{remark}
\newtheorem{example}[proposition]{Example}
\def\1{\mathbf{1}}
\def\Gal{\mathrm{Gal}}
\newcommand{\bfC}{\mathbb C}
\newcommand{\bfQ}{\mathbb Q}
\newcommand{\bfR}{\mathbb R}
\newcommand{\bfZ}{\mathbb Z}
\newcommand{\frakp}{\mathfrak{p}}
\newcommand{\frakq}{\mathfrak{q}}
\newcommand{\frakP}{\mathfrak{P}}
\def\moverlay{\mathpalette\mov@rlay}
\def\mov@rlay#1#2{\leavevmode\vtop{%
   \baselineskip\z@skip \lineskiplimit-\maxdimen
   \ialign{\hfil$\m@th#1##$\hfil\cr#2\crcr}}}
\newcommand{\charfusion}[3][\mathord]{
    #1{\ifx#1\mathop\vphantom{#2}\fi
        \mathpalette\mov@rlay{#2\cr#3}
      }
    \ifx#1\mathop\expandafter\displaylimits\fi}
\let\@@pmod\pmod
\DeclareRobustCommand{\pmod}{\@ifstar\@pmods\@@pmod}
\def\@pmods#1{\mkern4mu({\operator@font mod}\mkern 6mu#1)}
\begin{document}

\begin{abstract}
This paper gives a complete parametrization of the commensurability classes of totally geodesic subspaces of irreducible arithmetic quotients of $X_{a, b} = (\mathbf{H}^2)^a \times (\mathbf{H}^3)^b$. A special case describes all Shimura subvarieties of type $\mathrm{A}_1$ Shimura varieties. We produce, for any $n\geq 1$, examples of manifolds/Shimura varieties with precisely $n$ commensurability classes of totally geodesic submanifolds/Shimura subvarieties. This is in stark contrast with the previously studied cases of arithmetic hyperbolic $3$-manifolds and quaternionic Shimura surfaces, where the presence of one commensurability class of geodesic submanifolds implies the existence of infinitely many classes.
\end{abstract}

\maketitle


\vspace{-2pc}

\section{Introduction}

It is well-known that there is a close relationship between irreducible finite-volume quotients of the global symmetric space $X_{a, b} = (\mathbf{H}^2)^a \times (\mathbf{H}^3)^b$, where $\mathbf{H}^d$ denotes $d$-dimensional hyperbolic space, and quaternion algebras over number fields satisfying certain conditions. For example, see \cite{Vigneras}. In this paper we study the parametrization of nonabelian subalgebras of quaternion algebras over number fields and its effects on the geometry of the associated quotient spaces.

Previous work on this problem was restricted to the cases of \emph{arithmetic Kleinian groups} ($a = 0$ and $b = 1$) by Maclachlan--Reid \cite{MRFuchsian} or \cite[\S 9.5]{MR} and \emph{quaternionic Shimura surfaces} ($a = 2$ and $b = 0$) by Chinburg and the second author \cite{CS}. In both cases the proper closed geodesic subspaces associated with quaternion subalgebras are immersed Riemann surfaces, and one sees that if an irreducible arithmetic quotient of $X_{a, b}$ contains one such geodesic subspace, then it contains infinitely many. Moreover, the collection of all such Riemann surfaces forms infinitely many distinct commensurability classes; recall that two Riemannian manifolds are commensurable if they share an isometric finite-sheeted covering. We will show that this dichotomy of `none or infinitely many commensurability classes' does not necessarily persist for all $X_{a, b}$. For the main technical result, see Theorem \ref{theorem:sublattices}.

First, we describe some positive consequences. For both arithmetic Kleinian groups and quaternionic Shimura surfaces with invariant trace field $K$, closed geodesic subspaces are associated with quaternion subalgebras defined over quadratic subfields $K_0 \subset K$. In this case $K / K_0$ is necessarily Galois with Galois group $\bfZ / 2$. We show that this Galois condition is in fact the driving force behind the above dichotomy.

{
\renewcommand{\thethm}{\ref{theorem:infinitelymanysublattices}}
\begin{thm}
Let $K$ be a number field and $K_0\subset K$ a subfield for which $K/K_0$ is Galois of even degree. If $\Gamma\subset \SL_2(\bfR)^a\times\SL_2(\bfC)^b$ is an irreducible arithmetic lattice arising from a quaternion algebra $A$ over $K$ that contains an irreducible arithmetic sublattice $\Sigma\subset \SL_2(\bfR)^c\times \SL_2(\bfC)^d$ arising from a quaternion algebra $B$ over $K_0$, then $\Gamma$ contains infinitely many pairwise incommensurable arithmetic lattices in $\SL_2(\bfR)^c\times \SL_2(\bfC)^d$.
\end{thm}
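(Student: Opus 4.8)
The plan is to reduce the statement to a purely arithmetic assertion: that there are infinitely many pairwise non‑isomorphic quaternion $K_0$‑algebras $B'$ having the same archimedean ramification as $B$ and satisfying $B'\otimes_{K_0}K\cong A$. Indeed, the existence of $\Sigma\subset\Gamma$ forces $A\cong B\otimes_{K_0}K$ (this is part of the content of Theorem~\ref{theorem:sublattices}), and once that identification is in hand, each such $B'$ produces via Theorem~\ref{theorem:sublattices} an irreducible arithmetic sublattice of $\Gamma$ lying in $\SL_2(\bfR)^c\times\SL_2(\bfC)^d$ (same signature because the archimedean ramification of $B'$, hence the number of real places at which it splits and the number of complex places, agrees with that of $B$); and two such sublattices are incommensurable as soon as the corresponding algebras have distinct ramification sets, since quaternion algebras over the fixed field $K_0$ give commensurable lattices only when isomorphic. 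So the whole point is to construct the family $\{B'\}$.

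Given a quaternion $K_0$‑algebra $B''$, let $B'$ be the quaternion algebra with Brauer class $[B]+[B'']$; this is again a quaternion algebra because both classes are $2$‑torsion, and $B'\otimes_{K_0}K\cong A$ precisely when $\res_{K_0/K}([B''])=0$, i.e.\ $[B'']$ lies in the relative Brauer group $\Br(K/K_0)=\ker\!\big(\Br(K_0)\to\Br(K)\big)$. If moreover $B''$ is split at every archimedean place of $K_0$ and $\Ram(B'')\cap\Ram(B)=\varnothing$, then $B'$ has the same archimedean ramification as $B$ and $\Ram(B')=\Ram(B)\sqcup\Ram(B'')$. Thus it suffices to produce infinitely many quaternion $K_0$‑algebras $B''$, split at all archimedean places, with $[B'']\in\Br(K/K_0)$ and with pairwise distinct finite ramification sets.

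This is where the hypotheses are used. For a finite place $v$ of $K_0$ unramified in $K$ and a quaternion class with $\mathrm{inv}_v=\tfrac12$ and trivial invariant elsewhere, the condition that the class lie in $\Br(K/K_0)$ is exactly that $[K_w:(K_0)_v]\cdot\tfrac12=0$ in $\bfQ/\bfZ$ for all $w\mid v$, i.e.\ that the local degree $[K_w:(K_0)_v]$ be even; and because $K/K_0$ is Galois, this local degree equals $\mathrm{ord}(\Frob_v)$ and is the same for every $w\mid v$. Since $[K:K_0]$ is even, $\Gal(K/K_0)$ has an element of even order by Cauchy's theorem, so the conjugation‑invariant set of elements of even order is nonempty; by the Chebotarev density theorem there are infinitely many finite places $v$ of $K_0$, disjoint from $\Ram(B)$ and from the ramified primes of $K/K_0$, with $\Frob_v$ of even order. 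For each even‑cardinality subset $S$ of this infinite set of places, the quaternion $K_0$‑algebra $B''_S$ with $\Ram(B''_S)=S$ exists (and is automatically split at all archimedean places), and the invariant computation just made shows $[B''_S]\in\Br(K/K_0)$. Letting $S$ range over distinct even subsets yields the required infinite family, with $\Ram(B'_S)=\Ram(B)\sqcup S$ pairwise distinct, and hence infinitely many pairwise incommensurable arithmetic sublattices of $\Gamma$ in $\SL_2(\bfR)^c\times\SL_2(\bfC)^d$.

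I expect the Chebotarev step to be the crux: the combination ``Galois'' (so that all local degrees over a given $v$ coincide and are controlled by $\Frob_v$) and ``even degree'' (so that an element of even order exists) is exactly what forces $\Br(K/K_0)[2]$ to be infinite; for $K/K_0$ of odd degree, or without the Galois hypothesis, one can have $\Br(K/K_0)[2]=0$ and the construction collapses—this is consistent with, and indeed is the engine behind, the dichotomy described in the introduction. The remaining points are routine: identifying the precise output of Theorem~\ref{theorem:sublattices} (including the order‑level compatibility needed to realize each $\Sigma_S$ literally inside $\Gamma$ rather than merely up to commensurability), and checking the elementary facts that $[B]+[B'']$ is represented by a quaternion algebra and that distinct ramification sets over a fixed field give incommensurable arithmetic lattices.
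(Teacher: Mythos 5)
Your proposal is correct and follows essentially the same route as the paper: reduce via Theorem~\ref{theorem:sublattices} to producing infinitely many $K_0$-algebras $B'$ with the same archimedean ramification as $B$ and $B'\otimes_{K_0}K\cong A$, then use Cauchy's theorem plus Chebotarev (the paper's Lemma~\ref{lem:EvenLocal}) to find infinitely many finite places of $K_0$ with all local degrees even, and enlarge $\Ram(B)$ by even-cardinality sets of such places. Your relative-Brauer-group packaging of the last step is only a cosmetic variant of the paper's direct Hasse-invariant computation; the one point worth adding is that incommensurability requires the $B'$ to be pairwise non-$\Aut(K_0/\bfQ)$-conjugate rather than merely non-isomorphic, which follows since each conjugacy orbit is finite.
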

\addtocounter{thm}{-1}
}

For example, this always holds when $K / \bfQ$ is cyclic of even degree and $A = B \otimes_{\bfQ} K$ for some $\bfQ$-quaternion algebra $B$ (cf.\ Corollary \ref{cor:2^n}). However, such a result does not hold in general. When the extension $K/K_0$ is either not Galois or is of odd degree, new possible finiteness phenomena arise. For instance in Theorem \ref{thm:A5} we prove the existence of arithmetic lattices acting on $(\mathbf{H}^2)^6$ containing a unique commensurability class of arithmetic Fuchsian subgroups and no other arithmetic sublattices. In light of our results it seems likely that the `nonempty implies infinitely many commensurability classes' phenomenon is unique to the cases where $X_{a, b}$ has dimension at most four, i.e., hyperbolic $3$-manifolds and quaternionic Shimura surfaces. We note that similar results have been obtained by McReynolds \cite{McR}. 

The following result exemplifies the behavior that can arise when $K / K_0$ has odd degree.

{
\renewcommand{\thethm}{\ref{theorem:finitelymany}}
\begin{thm}
Let $K/\bfQ$ be a totally real cyclic Galois extension of odd degree $n>1$. There exist infinitely many pairwise incommensurable irreducible lattices $\Gamma\subset \SL_2(\bfR)^n$ with invariant trace field $K$ whose arithmetic sublattices lie in precisely $\tau(n)$ commensurability classes, where $\tau$ denotes the divisor function.
\end{thm}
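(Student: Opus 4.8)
The plan is to read off the statement from the classification in Theorem \ref{theorem:sublattices}, which describes the irreducible arithmetic sublattices of $\Gamma$ (up to commensurability) in terms of pairs consisting of a subfield $K_0\subseteq K$ and a quaternion algebra $B$ over $K_0$ with $B\otimes_{K_0}K\cong A$, the remaining local conditions being automatic in the totally real setting here. Since $K/\bfQ$ is cyclic of odd degree $n$, the Galois correspondence yields exactly one subfield of each degree $d\mid n$, so $K$ has precisely $\tau(n)$ subfields, and every sub-extension $K/K_0$ is again cyclic of odd degree $n/d$. The entire argument then rests on one Brauer-theoretic observation: because $[K:K_0]$ is odd, $\Cores_{K/K_0}\circ\Res_{K/K_0}$ is multiplication by $[K:K_0]$, which is invertible on $2$-torsion, so $\Res_{K/K_0}\colon\Br(K_0)[2]\to\Br(K)[2]$ is injective. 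Hence a quaternion algebra over $K_0$ is determined up to isomorphism by its base change to $K$.

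\emph{Construction.} Fix a division quaternion algebra $B_0$ over $\bfQ$ that is split at the real place, and set $A=B_0\otimes_\bfQ K$. Since $K$ is totally real, $A$ is unramified at every infinite place of $K$; since every local degree in the Galois extension $K/\bfQ$ divides the odd number $n$, no completion of $K$ splits the local division algebras of $B_0$, so $A$ is ramified at all places of $K$ above the (nonempty, finite) ramification locus of $B_0$ and is in particular a division algebra. Thus an arithmetic lattice $\Gamma$ derived from $A$ is a cocompact irreducible lattice in $\SL_2(\bfR)^n$; as its traces lie in $K$ and its invariant trace field is a totally real field of degree $n$, that invariant trace field is exactly $K$. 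For each subfield $K_0\subseteq K$ of degree $d$, put $B_{K_0}=B_0\otimes_\bfQ K_0$; the same local analysis shows $B_{K_0}$ is a division algebra over $K_0$ split at all infinite places, and $B_{K_0}\otimes_{K_0}K\cong B_0\otimes_\bfQ K=A$. By Theorem \ref{theorem:sublattices}, $(K_0,B_{K_0})$ produces an irreducible arithmetic sublattice $\Sigma_{K_0}\subset\SL_2(\bfR)^d$ of $\Gamma$, necessarily with invariant trace field $K_0$. As the $\tau(n)$ subfields are pairwise distinct, the $\Sigma_{K_0}$ fall into at least $\tau(n)$ commensurability classes.

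\emph{No further classes, and infinitely many $\Gamma$.} Conversely, any irreducible arithmetic sublattice $\Sigma$ of $\Gamma$ is, by Theorem \ref{theorem:sublattices}, commensurable to the sublattice attached to some subfield $K_0\subseteq K$ and some quaternion algebra $B$ over $K_0$ with $B\otimes_{K_0}K\cong A$. Then $\Res_{K/K_0}(B)=A=\Res_{K/K_0}(B_{K_0})$ in $\Br(K)$, and injectivity of $\Res_{K/K_0}$ on $\Br(K_0)[2]$ forces $B\cong B_{K_0}$, so $\Sigma$ is commensurable to $\Sigma_{K_0}$. Hence $\Gamma$ has exactly $\tau(n)$ commensurability classes of arithmetic sublattices. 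Finally, letting $B_0$ range over the infinitely many isomorphism classes of division quaternion algebras over $\bfQ$ split at the real place, the algebras $A=B_0\otimes_\bfQ K$ are pairwise non-isomorphic because $\Res_{K/\bfQ}$ is injective on $\Br(\bfQ)[2]$ (as $[K:\bfQ]=n$ is odd); the corresponding lattices $\Gamma$ are therefore pairwise incommensurable, each with invariant trace field $K$ and with arithmetic sublattices in precisely $\tau(n)$ commensurability classes.

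The main obstacle, beyond invoking Theorem \ref{theorem:sublattices}, is confirming that in this totally real, odd-degree situation the parametrization really has the clean shape used above: that realizing $(K_0,B_{K_0})$ as an honest sublattice meets no extra local obstruction, and that $\Sigma_{K_0}$ has invariant trace field exactly $K_0$ rather than a proper subfield. Both are forced by signature bookkeeping — a sublattice in $\SL_2(\bfR)^d$ has a totally real invariant trace field of degree exactly $d$, which among subfields of $K$ pins it to $K_0$, and $A$ being split at every infinite place of $K$ forces $B_{K_0}$ to be split at every infinite place of $K_0$ — after which everything reduces to the dichotomy that odd-degree base change is injective on the $2$-torsion of the Brauer group.
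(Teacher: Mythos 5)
Your argument is correct, but the key uniqueness step is carried out by a genuinely different mechanism than the paper's. The paper chooses $A$ to be ramified exactly at the primes of $K$ above two rational primes $p,q$ that split completely in $K$, and then proves that the $K_0$-algebra $B$ with $B\otimes_{K_0}K\cong A$ is unique by a place-by-place Hasse invariant computation: since $K/K_0$ is Galois, every local degree divides $n/d$ and is therefore odd, so $\inv_{\mathfrak L}(B'\otimes_{K_0}K)=[K_{\mathfrak L}:(K_0)_{\mathfrak l}]\inv_{\mathfrak l}(B')$ cannot vanish at a ramified place of $B'$, forcing $\Ram(B')$ to be pinned down. You replace this entire local analysis with the global observation that $\Cores_{K/K_0}\circ\Res_{K/K_0}$ is multiplication by the odd number $[K:K_0]$, hence $\Res_{K/K_0}$ is injective on $\Br(K_0)[2]$; this is a cleaner one-line substitute that moreover does not use that $K/K_0$ is Galois, only that its degree is odd (of course both arguments ultimately rest on the same fact that odd-degree local extensions preserve the invariant $\tfrac12$). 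Your family of examples $A=B_0\otimes_{\bfQ}K$ is also strictly larger than the paper's, whose $A$ is the special case where $B_0$ is ramified at two completely split primes; your choice requires the extra (easy) verification that no completion of $K$ kills the local invariants of $B_0$, which you supply. Two small points worth making explicit: incommensurability of the various $\Gamma$ requires the algebras $A$ to be pairwise non-isomorphic as $\Aut(K/\bfQ)$-twists, not merely non-isomorphic, but since $\Ram(A)$ is visibly Galois-stable for $A=B_0\otimes_{\bfQ}K$ this follows from your Brauer-group injectivity over $\bfQ$; and your parenthetical claim that a sublattice in $\SL_2(\bfR)^d$ has invariant trace field of degree exactly $d$ is false in general (arithmetic Fuchsian groups can have invariant trace fields of any degree), but is valid here because the degree formula of Theorem \ref{theorem:sublattices}, specialized as in Example \ref{example:totallyreal}, forces $[K_0:\bfQ]=d$ once $K_0\subset K$ is totally real.
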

\addtocounter{thm}{-1}
}

Note that $\tau(n)$ achieves every positive integer value. We emphasize that, even when there are finitely many commensurability classes, each commensurability class determines infinitely many distinct immersed submanifolds. Although this is well-known to experts, we give a proof in Lemma \ref{lem:LotsOfDistinct} for the convenience of the reader.

When $K$ is a totally real field (i.e., $b = 0$) we are considering arithmetic quotients of $(\mathbf{H}^2)^a$. These are also known as \emph{Shimura varieties of type} $\mathrm{A}_1$. In particular, certain special elements of each commensurability class, namely those coming from principal congruence subgroups, parametrize principally polarized abelian varieties with added structure. See \cite[p.~87]{Milne}. Moreover, the Shimura subvarieties are totally geodesic submanifolds of complex codimension $0 < c < a$. The fact that $K$ is totally real makes the classification simpler in this case.

\begin{thm}\label{thm:Shimura}
Let $V$ be a Shimura variety of type $\mathrm{A}_1$ with associated totally real field $K$ and $K$-quaternion algebra $A$. Suppose that $V$ is the quotient of $(\mathbf{H}^2)^a$ by an irreducible lattice in $\SL_2(\bfR)^a$, so $\dim_\bfC(V) = a$.
\begin{enumerate}

\item Necessary conditions for $V$ to contain a proper Shimura subvariety of dimension $0 < c < a$ are that $c \mid a$ and for there to exist a proper subfield $K_0 \subset K$ satisfying
\[
[K : K_0] = \frac{a}{c}.
\]

\item Suppose that (i) holds. Then $K_0$ determines a nonempty set of Shimura subvarieties of $V$ if and only if for every $w \in \Ram(A)$ and $v$ the unique place of $K_0$ divisible by $w$, the following hold:
\begin{enumerate}
\item The local degree $[K_w : (K_0)_v]$ is odd.
\item If $w'\neq w$ is place of $K$ dividing $v$ and the local degree $[K_{w'} : (K_0)_v]$ is odd then $w'\in\Ram(A)$.
\item If every place $v'$ of $K_0$ is divisible by a place $w'$ of $K$ for which $[K_{w'} : (K_0)_{v'}]$ is odd, then \[\{v\ :\ v \text{ is a place of $K_0$ divisible by some } w\in\Ram(A)\}\] has even cardinality.
\end{enumerate}

\item Suppose that (i) and (ii) hold and that $K / K_0$ is Galois of even degree. Then $V$ contains infinitely many distinct commensurability classes of distinct Shimura subvarieties arising from $K_0$-subalgebras of $A$.

\item Suppose that (i) holds and that $K / K_0$ has odd degree or is not Galois. Then $K_0$ could determine any number $0 \le t \le \infty$ of distinct commensurability classes of Shimura subvarieties of $V$. Moreover, every value of $t$ is attained by infinitely many incommensurable Shimura varieties.

\end{enumerate}
\end{thm}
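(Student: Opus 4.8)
The plan is to route everything through the parametrization of Theorem~\ref{theorem:sublattices}: a commensurability class of totally geodesic subspaces of $V$ corresponds to a quaternion subalgebra $B\subseteq A$ defined over a subfield $K_0\subseteq K$ with $B\cdot K=A$, equivalently $A\cong B\otimes_{K_0}K$, and since $K$ is totally real (so $b=0$) the corresponding subspace is a Shimura subvariety whose complex dimension equals the number of real places of $K_0$ at which $B$ splits. Part~(i) then falls out: if such a $B$ gives a subvariety of dimension $c$ it splits at exactly $c$ real places of $K_0$; each real place of $K_0$ carries exactly $[K:K_0]$ real places of $K$ because $K$ is totally real, and $A=B\otimes_{K_0}K$ is split at $w\mid v$ if and only if $B$ is split at $v$, so counting the $a$ real places of $K$ where $A$ splits gives $a=c\,[K:K_0]$. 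Hence $c\mid a$ and $[K:K_0]=a/c$, and $c<a$ forces $K_0$ to be proper.

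For part~(ii), granting~(i), the field $K_0$ determines a nonempty family if and only if there is a quaternion algebra $B/K_0$ with $B\otimes_{K_0}K\cong A$. I would use that a quaternion algebra over a number field is determined by its finite, even ramification set and that $\inv_w(B\otimes_{K_0}K)=[K_w:(K_0)_v]\cdot\inv_v(B)$ for $w\mid v$: thus one must pick local algebras $B_v$, all but finitely many split and an even number ramified, with $B_v\otimes_{(K_0)_v}K_w\cong A_w$ for every $w\mid v$. At a place $v$ of $K_0$ lying below $\Ram(A)$ the algebra $B_v$ is forced to be ramified, and base change of a ramified local quaternion algebra becomes split exactly in even local degree; this produces conditions~(a) and~(b). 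Away from such $v$ one is forced to take $B_v$ split unless every $w\mid v$ has even local degree. Writing $S$ for the set of places of $K_0$ below $\Ram(A)$ and $T$ for the set of places of $K_0$ all of whose local degrees in $K/K_0$ are even, the ramification set of an admissible $B$ is $S\cup U$ with $U\subseteq T$ arbitrary, and one can meet the parity constraint $|S|+|U|\in 2\bfZ$ precisely when $T\neq\emptyset$ or $|S|$ is even, which is condition~(c). Conversely, (a)--(c) allow one to build such a $B$, and Theorem~\ref{theorem:sublattices} converts it into genuine Shimura subvarieties of $V$.

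Part~(iii) is short given the first two: (i) and~(ii) already yield one Shimura subvariety of $V$ arising from a $K_0$-subalgebra $B$ of $A$, i.e.\ an irreducible arithmetic sublattice $\Sigma\subset\SL_2(\bfR)^c$ in $\Gamma$; since $K/K_0$ is Galois of even degree, Theorem~\ref{theorem:infinitelymanysublattices} (with $d=0$) gives infinitely many pairwise incommensurable arithmetic lattices in $\SL_2(\bfR)^c$ inside $\Gamma$, each of which is by Theorem~\ref{theorem:sublattices} the uniformizing lattice of a Shimura subvariety of $V$; incommensurable lattices give subvarieties in distinct commensurability classes, and Lemma~\ref{lem:LotsOfDistinct} shows each class still contains infinitely many distinct immersed subvarieties.

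Part~(iv) is where the work lies. The number of commensurability classes determined by $K_0$ is controlled by the number of admissible $B/K_0$ up to isomorphism, with the proviso that an admissible $B$ induced from a proper subfield of $K_0$ is to be attributed to that subfield rather than to $K_0$. When $[K:K_0]$ is odd, the local degrees above any place of $K_0$ sum to an odd number, so they cannot all be even; hence $T=\emptyset$ and by the analysis of~(ii) the count is $0$ or $1$, with $1$ realized by, say, a cyclic odd-degree $K/K_0$ and a Galois-stable even-size $\Ram(A)$, and $0$ obtained by arranging $|S|$ odd (or, in the non-Galois case, by letting $A$ ramify at a place of even local degree, which violates~(a)). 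When $K/K_0$ is not Galois, I would control $T$ through Chebotarev applied to the Galois closure $L/K_0$: if $\Gal(L/K_0)$ contains a class all of whose cycles on the $\Gal(L/K)$-cosets have even length then $T$ is infinite and $K_0$ determines infinitely many classes (for instance $[K:K_0]=4$ with $\Gal(L/K_0)=D_4$ and $K$ the fixed field of a non-central involution, where a generating rotation acts on the four cosets as a $4$-cycle); otherwise $T$ lies among the finitely many primes ramifying in $L/K_0$ and can be prescribed, after which a Grunwald--Wang choice of $A$ makes the primitive admissible $B/K_0$ fall into any prescribed finite number of classes. Perturbing $\Ram(A)$ at auxiliary primes disjoint from $S$ and $T$ then yields infinitely many incommensurable $V$ with the same value of $t$. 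The main obstacle is exactly this realization step: separating the admissible $B/K_0$ that genuinely belong to $K_0$ from those that descend to a subfield, and simultaneously engineering the Galois-closure group (for the infinite case) and the ramification (for small finite values) so that every $0\le t\le\infty$ is attained.
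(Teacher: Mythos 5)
Your proposal is correct and follows essentially the same route as the paper: part (i) via the place-counting in Theorem \ref{theorem:sublattices} (Example \ref{example:totallyreal}), part (ii) is a rederivation of Lemma \ref{lem:WhenEmbed?} from the Hasse-invariant formula, part (iii) is Theorem \ref{theorem:infinitelymanysublattices}, and part (iv) uses the same constructions as Theorems \ref{thm:A5} and \ref{theorem:finitelymany}. The realization step you flag as the remaining obstacle in (iv) is exactly the part the paper also leaves to its worked examples rather than proving in the displayed proof, so your level of completeness matches the paper's.
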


The ingredients for the proof of Theorem \ref{thm:Shimura} are a combination of various results below, which we quickly assemble.

\begin{proof}[Proof of Theorem \ref{thm:Shimura}]
Part (i) follows from Theorem \ref{theorem:sublattices}. See Example \ref{example:totallyreal} for details in the totally real case. Part (ii) is Lemma \ref{lem:WhenEmbed?}, and (iii) is Theorem \ref{theorem:infinitelymanysublattices}. The techniques for computing the number $t$ in case (iv) of Theorem \ref{thm:Shimura}, and examples realizing each value, are described throughout the paper. For a complete analysis of the odd degree case, one follows the proof of Theorem \ref{theorem:finitelymany} and Lemma \ref{lem:WhenEmbed?}. 
\end{proof}

\noindent
\textbf{Acknowledgments} The first author was partially supported by an NSF RTG grant DMS-1045119 and an NSF Mathematical Sciences Postdoctoral Fellowship. The second author was supported by the National Science Foundation under Grant Number NSF DMS-1361000 and acknowledges support from U.S. National Science Foundation grants DMS 1107452, 1107263, 1107367 ``RNMS: GEometric structures And Representation varieties'' (the GEAR Network). We also thank the referee for a very careful reading that caught an omission in the original draft.

\section{Constructing arithmetic lattices}\label{section:construction}

We begin by defining some notation. Throughout this paper $K$ and $K_0$ will denote number fields with rings of integers $\mathcal O_K$ and $\mathcal O_{K_0}$. Given a quaternion algebra $B$ over $K$, $\Ram(B)$ denotes the set of primes of $K$, both finite and infinite, that ramify in $B$. Then $\Ram_\infty(B)$ (resp.\ $\Ram_f(B)$) denotes the set of archimedean places of $K$ (resp.\ finite primes of $K$) that ramify in $B$. Given a prime $\frakp\in\Ram_f(B)$, we denote by $\inv_\frakp(B)$ the local Hasse invariant of $B$ at the prime $\frakp$ (i.e., the Hasse invariant of the $K_\frakp$-algebra $B\otimes_K K_\frakp$).

We now recall the construction of arithmetic lattices acting on products of hyperbolic planes and $3$-dimensional hyperbolic spaces. For a more detailed treatment see \cite[\S 3]{borel}. 

Let $K$ be a number field with $r_1$ real places and $r_2$ complex places, so $[K:\bfQ]=r_1+2r_2$, and $B$ be a quaternion algebra over $K$ that is not totally definite. The isomorphism
\[
B\otimes_{\bfQ} \bfR \cong \mathrm{M}_2(\bfR)^s \times \mathbb H^r \times \mathrm{M}_2(\bfC)^{r_2},
\]
where $s+r=r_1$, determines an embedding
\[
\pi: B^\times \hookrightarrow \prod_{v_i\not\in\Ram_\infty(B)} (B\otimes_K K_{v_i})^\times \longrightarrow \GL_2(\bfR)^s \times \GL_2(\bfC)^{r_2}.
\]
Restricting to the elements $B^1$ of $B^\times$ with reduced norm $1$ induces an embedding
\[
\pi: B^1\hookrightarrow \SL_2(\bfR)^s \times \SL_2(\bfC)^{r_2}.
\]

Let $\mathcal O$ be a maximal order of $B$ and $\mathcal O^1$ be the multiplicative subgroup of $\mathcal O^\times$ consisting of those elements with reduced norm $1$. The image $\pi(\mathcal O^1)\subset \SL_2(\bfR)^s \times \SL_2(\bfC)^{r_2}$ is a lattice by the work of Borel and Harish-Chandra \cite{BHC}. An irreducible lattice $\Gamma \subset \SL_2(\bfR)^a \times \SL_2(\bfC)^{b}$ is arithmetic if and only if it is commensurable with a lattice of the form $\pi(\mathcal O^1)$. When $a + b > 1$, all irreducible lattices are in fact arithmetic \cite[p.\ 2]{margulis}. We note that two arithmetic lattices are commensurable if and only if they are associated with the same number field $K$ and quaternion algebras over $K$ that are $\Aut(K/\bfQ)$-isomorphic \cite[Theorem 8.4.7]{MR}. Lastly, an arithmetic lattice is cocompact if and only if its associated quaternion algebra is not isomorphic to the matrix algebra $\mathrm{M}_2(K)$.

\section{Lattices acting on products of hyperbolic spaces}

Before stating this section's main theorem we introduce convenient notation for dealing with extensions of archimedean places of number fields. Let $K_0$ be a number field and $v$ an archimedean place of $K_0$. Given a finite degree extension $K$ of $K_0$, we define $r_K(v)$ to be the number of extensions of $v$ to $K$ that are real and $c_K(v)$ to be the number of extensions of $v$ to $K$ that are complex. Thus $[K:K_0]=r_K(v)+2c_K(v)$. As an example, if $K$ is a cubic extension of $\bfQ$ whose minimal polynomial has exactly one real root and $v_\infty$ denotes the real place of $\bfQ$, then $r_K(v_\infty)=1$ and $c_K(v_\infty)=1$.

\begin{thm}\label{theorem:sublattices}
Let $\Gamma\subset \SL_2(\bfR)^a\times \SL_2(\bfC)^b$ be an irreducible arithmetic lattice arising from a quaternion algebra $A$ over a number field $K$. There is a one-to-one correspondence between
\begin{enumerate}

\item commensurability classes of irreducible arithmetic lattices of $\SL_2(\bfR)^c\times \SL_2(\bfC)^d$ that are contained in $\Gamma$, and

\item subfields $K_0$ of $K$ having precisely $d$ complex places and $\Aut(K_0/\bfQ)$-isomorphism classes of $K_0$-subalgebras $B\subset A$ such that $A\cong B\otimes_{K_0}K$, where $B$ is split at precisely $c$ real places of $K_0$.

\end{enumerate}
Furthermore, under this correspondence it is necessarily the case that
\[
[K:K_0]=\frac{2b+a+\sum_{v_j\in\Ram_\infty(B)} r_K(v_j)}{2d+c+\#\Ram_\infty(B)}.
\]
\end{thm}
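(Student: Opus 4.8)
The strategy is to build the correspondence directly from the algebraic description of arithmetic lattices in Section~\ref{section:construction}: first match individual sublattices with pairs $(K_0,B)$, then check that the matching respects commensurability on one side and $\Aut(K_0/\bfQ)$-isomorphism on the other, and finally read the degree formula off the archimedean places. For the forward direction, let $\Sigma$ be an irreducible arithmetic lattice in $\SL_2(\bfR)^c\times\SL_2(\bfC)^d$ that, after conjugating inside $\SL_2(\bfR)^a\times\SL_2(\bfC)^b$, is contained in $\Gamma$. Replacing $\Sigma$ by $\Sigma\cap\pi(\mathcal O^1)$ for the maximal order $\mathcal O\subset A$ defining $\Gamma$ up to commensurability, we may assume $\Sigma\subseteq\pi(\mathcal O^1)$, and since $\pi$ is injective on $A^1$ we may regard $\Sigma$ as a subgroup of $A^1$. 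As an irreducible lattice in a product of copies of $\SL_2(\bfR)$ and $\SL_2(\bfC)$, $\Sigma$ is Zariski dense (Borel density), hence non-elementary, so standard trace-field arguments (as in \cite{MR}) show that the invariant trace field $K_0$ of $\Sigma$ is a subfield of $K$ and that the invariant quaternion algebra $B$ of $\Sigma$ is a $4$-dimensional noncommutative $K_0$-subalgebra of $A$ (it is here that irreducibility of $\Sigma$ is used, so that $K_0$ is a field rather than a nontrivial product). Noncommutativity of $B$ forces the $K$-subalgebra of $A$ it generates to be $4$-dimensional over $K$, hence all of $A$, so $B\otimes_{K_0}K\to A$ is surjective and therefore an isomorphism by comparing $K$-dimensions; and reading the construction of Section~\ref{section:construction} backwards identifies $d$ with the number of complex places of $K_0$ and $c$ with the number of real places of $K_0$ at which $B$ is split.

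Conversely, suppose $K_0\subseteq K$ has $d$ complex places and $B\subseteq A$ is a $K_0$-subalgebra with $B\otimes_{K_0}K\cong A$; then $B$ is a quaternion algebra over $K_0$, split at some number $c$ of real places. Put $\mathcal O_B=B\cap\mathcal O$, an $\mathcal O_{K_0}$-order of $B$ of finite index in a maximal order. The identification $B\otimes_{K_0}K=A$ together with the unit of the restriction-of-scalars adjunction gives a $\bfQ$-group embedding $\Res_{K_0/\bfQ}B^1\hookrightarrow\Res_{K/\bfQ}A^1$ whose real points intertwine the embeddings $\pi$ of Section~\ref{section:construction} attached to $(B,K_0)$ and to $(A,K)$, and under which $\mathcal O_B^1$ lands inside $\mathcal O^1$. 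After projecting away the compact $\SU(2)$-factors arising at the complex places of $K$ lying above ramified real places of $K_0$, the image of $\mathcal O_B^1$ is an irreducible arithmetic lattice in $\SL_2(\bfR)^c\times\SL_2(\bfC)^d$ that is contained in $\Gamma$ and realizes the pair $(K_0,B)$.

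It remains to see that these two constructions descend to the stated equivalence relations and are mutually inverse. By the commensurability criterion recalled in Section~\ref{section:construction} (\cite[Theorem~8.4.7]{MR}), the commensurability class of the lattice attached to a pair depends only on $K_0$ and on the $\Aut(K_0/\bfQ)$-isomorphism class of $B$, and conversely $\Aut(K_0/\bfQ)$-isomorphic data give rise to commensurable sublattices of $\Gamma$; combined with the previous two paragraphs this yields the bijection between (1) and (2). I expect this last bookkeeping to be the main obstacle: one must reduce consistently to \emph{invariant} trace fields and quaternion algebras so that the commensurability criterion applies, verify that $\Aut(K_0/\bfQ)$-isomorphism of subalgebras matches conjugacy under the commensurator of $\Gamma$, and pin down precisely which pairs $(K_0,B)$ with $B\otimes_{K_0}K\cong A$ actually occur; the two constructions themselves are essentially formal once the correct invariants are in hand.

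Finally, for the degree formula, since $A\cong B\otimes_{K_0}K$, every real place $w$ of $K$ lies over an (automatically real) place $v$ of $K_0$ and satisfies $A\otimes_K K_w\cong B\otimes_{K_0}K_w=B_v$, so $w\in\Ram_\infty(A)$ if and only if $v\in\Ram_\infty(B)$, while $A$ is automatically split at the complex places of $K$. Hence $\#\Ram_\infty(A)=\sum_{v_j\in\Ram_\infty(B)}r_K(v_j)$. The construction of Section~\ref{section:construction} also shows $[K:\bfQ]=a+2b+\#\Ram_\infty(A)$ and $[K_0:\bfQ]=c+2d+\#\Ram_\infty(B)$, so dividing $[K:\bfQ]=[K:K_0]\,[K_0:\bfQ]$ produces
\[
[K:K_0]=\frac{2b+a+\sum_{v_j\in\Ram_\infty(B)}r_K(v_j)}{2d+c+\#\Ram_\infty(B)},
\]
the asserted identity.
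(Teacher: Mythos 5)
Your proposal is correct and follows essentially the same route as the paper: the correspondence is obtained by passing between a sublattice and its invariant trace field and quaternion algebra (with the converse construction via an order of $B$ inside $\mathcal{O}$), and the degree formula rests on the same key observation that a ramified real place of $K_0$ cannot extend to a split real place of $K$. The only cosmetic difference is the endgame: you divide the global identities $[K:\bfQ]=a+2b+\#\Ram_\infty(A)$ and $[K_0:\bfQ]=c+2d+\#\Ram_\infty(B)$ after noting $\#\Ram_\infty(A)=\sum_{v_j\in\Ram_\infty(B)}r_K(v_j)$, whereas the paper counts places of $K$ over each place of $K_0$ and combines $a=\sum_{v_i\notin\Ram_\infty(B)}r_K(v_i)$ with $b=d[K:K_0]+\sum_{v_i}c_K(v_i)$; these are equivalent computations.
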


\begin{rmk}
When $a+b>1$ it follows from the Margulis arithmeticity theorem \cite[p.\ 2]{margulis} that all irreducible lattices in $\SL_2(\bfR)^a\times \SL_2(\bfC)^b$ are arithmetic, hence the word ``arithmetic'' may be removed from the first statement in Theorem \ref{theorem:sublattices}.
\end{rmk}
	
Before proving Theorem \ref{theorem:sublattices} we consider a few examples in which Theorem \ref{theorem:sublattices} simplifies considerably.

\begin{example}
Consider the case in which $(a,b)=(0,1)$ and $(c,d)=(1,0)$. In this case Theorem \ref{theorem:sublattices} provides a criterion for the existence of arithmetic Fuchsian subgroups of a fixed arithmetic Kleinian group. Consider a real place $v$ of $K_0$ that ramifies in $B$. If $v$ extends to a complex place of $K$ then the fact that $K$ must have a unique complex place implies that any real place of $K_0$ splitting in $B$ extends to at least two real places of $K$ which necessarily split in $A$. This contradicts that fact that $A$ is ramified at all real places of $K$ (implied by our hypothesis that $a=0$). We therefore deduce that $r_K(v_j)=[K:K_0]$ for all $v_j\in\Ram_\infty(B)$, hence the above formula for $[K:K_0]$ shows that $[K:K_0]=2$.  Theorem \ref{theorem:sublattices} therefore shows that commensurability classes of arithmetic Fuchsian subgroups of $\Gamma$ correspond to degree $2$ totally real subfields $K_0$ of $K$ and isomorphism classes of quaternion algebras $B$ over $K_0$, split at a unique real place, satisfying $A\cong B\otimes_{K_0}K$. This is Theorem 9.5.4 of \cite{MR}.
\end{example}

\begin{example}\label{example:totallyreal}
Now consider the case in which $(a,b)=(n,0)$ and $(c,d)=(m,0)$, where $n>m\geq 1$. In this situation both $K$ and $K_0$ are totally real fields. It follows that every real place of $K_0$ extends to $[K:K_0]$ real places of $K$ so that 
$r_K(v_j)=[K:K_0]$ for all $v_j\in\Ram_\infty(B)$. Thus $[K:K_0]=\frac{n}{m}$. In particular this implies that if an irreducible lattice in $\SL_2(\bfR)^n$ contains an irreducible arithmetic lattice in $\SL_2(\bfR)^m$ then $m\mid n$ and $n\geq 2m$. When $n=2$ and $m=1$ this reproves Theorem 3.1 of \cite{CS}.
\end{example}

We now prove Theorem \ref{theorem:sublattices}.

\begin{proof}[Proof of Theorem \ref{theorem:sublattices}]
That a subfield $K_0$ of $K$ and quaternion algebra $B$ as in (ii) will produce a commensurability class of arithmetic subgroups of $\SL_2(\bfR)^c\times \SL_2(\bfC)^d$ contained in $\Gamma$ is clear.

Conversely, suppose that $\Sigma$ is an irreducible arithmetic lattice of $\SL_2(\bfR)^c\times \SL_2(\bfC)^d$ contained in $\Gamma$. Denote by $B$ the quaternion algebra associated to $\Sigma$ and by $K_0$ the center of $B$. That $K_0$ has exactly $d$ complex places is clear. The inclusion $\Sigma\subset \Gamma$ induces an embedding of algebras $B\hookrightarrow A$, and it follows immediately that $A\cong B\otimes_{K_0}K$ and that $B$ splits at precisely $c$ real places of $K_0$.

All that remains is to prove that
\begin{equation}\label{formula}
[K:K_0]=\frac{2b+a+\sum_{v_j\in\Ram_\infty(B)} r_K(v_j)}{2d+c+\#\Ram_\infty(B)}.
\end{equation}
Neither a complex place of $K_0$ nor a real place of $K_0$ ramifying in $B$ can extend to a real place of $K$ that is split in $A$. Indeed, in the latter case we would obtain an induced embedding of Hamilton's quaternion algebra into $\mathrm{M}_2(\bfR)$, which is a contradiction. We conclude that only real places of $K_0$ that split in $B$ can extend to real places of $K$ splitting in $A$. Therefore
\begin{align}
\label{realplaces}a&=\sum_{v_i\not\in\Ram_\infty(B)} r_K(v_i) \\
\label{complexplaces}b&=d[K:K_0]+\sum_{v_i} c_K(v_i),
\end{align}
where the latter sum is taken over the real places of $K_0$. Formula \eqref{formula} now follows from combining the equality $[K:K_0]=r_K(v_i)+2c_K(v_i)$ with equation \eqref{complexplaces} and substituting \eqref{realplaces} into the resulting expression.
\end{proof}	

In concrete applications, one needs a criterion for a $K_0$-quaternion algebra $B$ to embed in a $K$-algebra $A$. The following lemma gives necessary and sufficient criteria for $B$ to embed in $A$. This generalizes Theorem 9.5.5 of \cite{MR}, which considers the case in which $[K:K_0]=2$.
\begin{lem}\label{lem:WhenEmbed?}
Let $A$ be a $K$-quaternion algebra and $K_0 \subset K$ a proper subfield. Then there exists a $K_0$-quaternion algebra $B$ such that $A \cong B \otimes_{K_0} K$ if and only if for every $w \in \Ram(A)$ and $v$ the unique place of $K_0$ divisible by $w$, the following conditions hold.
\begin{enumerate}
\item The local degree $[K_w : (K_0)_v]$ is odd.
\item If $w'\neq w$ is place of $K$ dividing $v$ and the local degree $[K_{w'} : (K_0)_v]$ is odd then $w'\in\Ram(A)$.
\item If every place $v'$ of $K_0$ is divisible by a place $w'$ of $K$ for which $[K_{w'} : (K_0)_{v'}]$ is odd, then \[\{v : v \text{ is a place of $K_0$ divisible by some } w\in\Ram(A)\}\] has even cardinality.
\end{enumerate}
\end{lem}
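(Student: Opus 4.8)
The plan is to translate the condition $A\cong B\otimes_{K_0}K$ into a statement about ramification sets and then carry out a parity count. The key local fact is the following: if $B$ is any $K_0$-quaternion algebra, $w$ a place of $K$, and $v$ the place of $K_0$ below $w$, then $B\otimes_{K_0}K$ is ramified at $w$ if and only if $B$ is ramified at $v$ and the local degree $[K_w:(K_0)_v]$ is odd. This is immediate from the computation of local invariants: writing $A\otimes_K K_w=\bigl(B\otimes_{K_0}(K_0)_v\bigr)\otimes_{(K_0)_v}K_w$, a split quaternion algebra over $(K_0)_v$ stays split, while the quaternion division algebra over $(K_0)_v$ has invariant $1/2$ and restriction to $\Br(K_w)$ multiplies the invariant by $[K_w:(K_0)_v]$ (at an archimedean $v$ this reads $\mathbb H\otimes_\bfR\bfC\cong\mathrm M_2(\bfC)$ and $\mathbb H\otimes_\bfR\bfR\cong\mathbb H$). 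I would record this as a preliminary observation and then use it repeatedly.

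Given that, necessity is direct. If such a $B$ exists and $w\in\Ram(A)$ lies over $v$, the local fact forces $[K_w:(K_0)_v]$ odd and $v\in\Ram(B)$; oddness is (i), and if some other $w'\neq w$ over $v$ has $[K_{w'}:(K_0)_v]$ odd then $v\in\Ram(B)$ again yields $w'\in\Ram(A)$, which is (ii). For (iii), its hypothesis says every place of $K_0$ has a place of $K$ of odd local degree above it; under that hypothesis the local fact shows $\{v: v\text{ lies below some }w\in\Ram(A)\}=\Ram(B)$, which has even cardinality since it is the ramification set of a quaternion algebra over a number field.

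For sufficiency I would set $S=\{v: v\text{ lies below some }w\in\Ram(A)\}$, a finite set of non-complex places of $K_0$. If $|S|$ is even, take $B$ to be the $K_0$-quaternion algebra with $\Ram(B)=S$. If $|S|$ is odd, then the hypothesis of (iii) cannot hold (otherwise (iii) would force $|S|$ even), so there is a place $v_0$ of $K_0$ with every place of $K$ above it of even local degree; by (i) this $v_0$ is not in $S$, and it is non-complex (a complex place of $K_0$ has a degree-$1$, hence odd, place above it), so one may take $B$ with $\Ram(B)=S\cup\{v_0\}$, now of even cardinality. In either case the auxiliary place, when present, contributes no ramification of $A$ upstairs, so by the local fact $\Ram(B\otimes_{K_0}K)=\{w: w\text{ lies over some }v\in S\text{ with }[K_w:(K_0)_v]\text{ odd}\}$. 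This set contains $\Ram(A)$ by (i) and the definition of $S$; conversely any such $w$ lies over a $v\in S$, which also lies under some $w_0\in\Ram(A)$, and then either $w=w_0$ or condition (ii) applied to $w_0$ forces $w\in\Ram(A)$. Hence $\Ram(B\otimes_{K_0}K)=\Ram(A)$, so $B\otimes_{K_0}K\cong A$ by the classification of quaternion algebras over $K$ by their ramification sets. The degenerate case $A\cong\mathrm M_2(K)$ is covered, since then $S=\emptyset$ and $B=\mathrm M_2(K_0)$ works.

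The main obstacle, such as it is, will be the parity correction when $|S|$ is odd: one must see that condition (iii) is exactly the hypothesis guaranteeing the needed extra place $v_0$, and one must keep in mind that $S$ can be strictly smaller than one might naively expect $\Ram(B)$ to be, because a place of $\Ram(B)$ all of whose extensions to $K$ have even local degree leaves no trace in $\Ram(A)$. Everything else is bookkeeping on top of the single local invariant computation, and the argument specializes to Theorem 9.5.5 of \cite{MR} when $[K:K_0]=2$ (where (ii) and (iii) become automatic).
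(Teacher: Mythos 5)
Your proposal is correct and follows essentially the same route as the paper: the single local invariant computation $\inv_w(B\otimes_{K_0}K)=[K_w:(K_0)_v]\cdot\inv_v(B)$ (Reiner, Thm.\ 31.9) gives necessity of (i)--(iii), and for sufficiency one takes $\Ram(B)$ to be the set of places of $K_0$ under $\Ram(A)$, padded by one auxiliary place of everywhere-even local degree when a parity correction is needed, with condition (iii) supplying that place via its contrapositive. Your extra checks (that the auxiliary place is not already in $S$ and is non-complex) are worthwhile points the paper passes over more quickly, but the argument is the same.
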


\begin{proof}
Suppose first that $B$ is a $K_0$-quaternion algebra such that $A \cong B \otimes_{K_0} K$. Let $w \in \Ram(A)$ and $v$ the unique place of $K_0$ divisible by $w$. By \cite[Thm.\ 31.9]{reiner} we see that
\[
\frac{1}{2}=\inv_w(A)=\inv_w(B \otimes_{K_0} K)=[K_w : (K_0)_v]\cdot\inv_v(B)\in\bfQ/\bfZ,
\]
hence we conclude that the local degree $[K_w : (K_0)_v]$ is odd and that $B$ is ramified at $v$. Now let $w'$ be another place of $K$ dividing $v$ and assume that $[K_{w'} : (K_0)_v]$ is odd. Then
\[
\inv_{w'}(A)=\inv_{w'}(B \otimes_{K_0} K)=[K_{w'} : (K_0)_v]\cdot\inv_v(B)=\inv_v(B)=\frac{1}{2}\in\bfQ/\bfZ,
\]
since $v$ ramifies in $B$. This shows that $w'\in\Ram(A)$.

We now show that (iii) holds. To that end, suppose that every place $v'$ of $K_0$ is divisible by some place $w'$ of $K$ with $[K_{w'} : (K_0)_{v'}]$ odd. The arguments in the previous paragraph show that
\[
\Ram(B)\supseteq \{v\ :\ v \text{ is divisible by some } w\in\Ram(A)\}.
\]
We will show that this is in fact an equality. To see this, let $v'$ be a place of $K_0$ that is in $\Ram(B)$ but is not divisible by a place of $K$ ramifying in $A$. By assumption there is a place $w'$ of $K$ which divides $v'$ and for which $[K_{w'} : (K_0)_{v'}]$ is odd. Observe that $\inv_{w'}(B\otimes_{K_0} K)=[K_{w'} : (K_0)_{v'}]\cdot\inv_{v'}(B)$. Since $w'\not\in \Ram(A)$ we necessarily have that $[K_{w'} : (K_0)_{v'}]$ is even, which is a contradiction.

Conversely, suppose that $w,w'$ and $v$ are as in the statement of the lemma and that conditions (i), (ii) and (iii) are satisfied. Let $B$ be the quaternion algebra over $K_0$ for which
\[
\Ram(B)=\{v\ :\ v \text{ is divisible by some } w\in\Ram(A)\}\cup S,
\]
where the set $S$ is empty if $\{v : v \text{ is divisible by some } w\in\Ram(A)\}$ has even cardinality and consists of a single place $v'$ of $K_0$ for which every place $w'$ of $K$ dividing $v'$ satisfies $[K_{w'} : (K_0)_{v'}]$ even otherwise. We note that such an algebra $B$ exists because condition (iii) implies that such a $v'$ exists whenever $S$ is required to be non-empty in order to have a set
\[
\{v : v \text{ is divisible by some } w\in\Ram(A)\}\cup S
\]
of even cardinality.

To show that $A \cong B \otimes_{K_0} K$ we must show that $\Ram(A)=\Ram(B \otimes_{K_0} K)$ (see e.g., \cite[\S 31]{reiner} and \cite[p.\ 277]{reiner}). Suppose first that $w\in\Ram(A)$ and let $v$ be the place of $K_0$ divisible by $w$. Then
\[
\inv_w(B \otimes_{K_0} K)=[K_w : (K_0)_v]\cdot \inv_v(B)=\frac{1}{2}\in\bfQ/\bfZ
\]
by condition (i) and the fact that $v\in\Ram(B)$. Thus $w\in\Ram(B \otimes_{K_0} K)$.

Now suppose that $w'\in\Ram(B \otimes_{K_0} K)$ and that $v$ is the place of $K_0$ divisible by $w'$. Then $\inv_{w'}(B \otimes_{K_0} K)=[K_{w'} : (K_0)_v]\inv_v(B)$, hence $B$ is ramified at $v$ and $[K_{w'} : (K_0)_v]$ is odd. By definition of $\Ram(B)$, $v$ is divisible by some place $w$ of $K$ lying in $\Ram(A)$. We now conclude that $w'\in\Ram(A)$ from condition (ii). This shows that $\Ram(A)=\Ram(B \otimes_{K_0} K)$ and finishes the proof.
\end{proof}

\section{Commensurability classes of arithmetic sublattices}

It is well-known that if an arithmetic hyperbolic $3$-manifold contains one totally geodesic surface, then it contains infinitely many distinct commensurability classes of totally geodesic surfaces. In \cite[Theorem 1.1]{CS} it was shown that if a Shimura surface contains one geodesic curve, then it contains infinitely many that are pairwise incommensurable. The following result gives the appropriate extension to higher rank arithmetic lattices, showing that this phenomenon is closely related to the fact that, in both cases, submanifolds are associated with Galois extensions of even degree.

\begin{thm}\label{theorem:infinitelymanysublattices}
Let $K$ be a number field and $K_0\subset K$ a subfield for which $K/K_0$ is Galois of even degree. If $\Gamma\subset \SL_2(\bfR)^a\times\SL_2(\bfC)^b$ is an irreducible arithmetic lattice arising from a quaternion algebra $A$ over $K$ that contains an irreducible arithmetic sublattice $\Sigma\subset \SL_2(\bfR)^c\times \SL_2(\bfC)^d$ arising from a quaternion algebra $B$ over $K_0$, then $\Gamma$ contains infinitely many pairwise incommensurable arithmetic lattices in $\SL_2(\bfR)^c\times \SL_2(\bfC)^d$.
\end{thm}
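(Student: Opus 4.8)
The plan is to extract from Theorem~\ref{theorem:sublattices} exactly what must be built, and then to build it by perturbing the ramification set of $B$ at finite places of $K_0$. Since $\Sigma\subset\Gamma$, the proof of Theorem~\ref{theorem:sublattices} already yields $A\cong B\otimes_{K_0}K$, and that theorem identifies the commensurability classes of irreducible arithmetic lattices in $\SL_2(\bfR)^c\times\SL_2(\bfC)^d$ that are contained in $\Gamma$ and arise from the fixed subfield $K_0$ with the $\Aut(K_0/\bfQ)$-isomorphism classes of $K_0$-quaternion algebras $B'$ such that $A\cong B'\otimes_{K_0}K$ and $B'$ is split at exactly $c$ real places of $K_0$. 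So it is enough to produce infinitely many such algebras $B'$ that remain pairwise non-isomorphic after the finite group $\Aut(K_0/\bfQ)$ acts.

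The engine is the identity $\inv_w(B'\otimes_{K_0}K)=[K_w:(K_0)_v]\cdot\inv_v(B')$ used in the proof of Lemma~\ref{lem:WhenEmbed?}: a finite place $v$ of $K_0$ whose local degree in $K$ is even contributes nothing to the ramification of $B'\otimes_{K_0}K$, and since $K/K_0$ is Galois this local degree is the same for every $w\mid v$, so the places of $K$ above $v$ are either all affected or all unaffected. Because $[K:K_0]$ is even, $\Gal(K/K_0)$ contains an element of order $2$, so by the Chebotarev density theorem there are infinitely many finite places $v$ of $K_0$, unramified in $K$, whose Frobenius element has order $2$; for each of these $[K_w:(K_0)_v]=2$. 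Discard the finitely many such $v$ lying in $\Ram(B)$ or below a place of $\Ram(A)$, enumerate the rest as $v_1,v_2,\dots$, and for each $k\geq 1$ let $B_k$ be the $K_0$-quaternion algebra with $\Ram(B_k)=\Ram(B)\sqcup\{v_{2k-1},v_{2k}\}$ (a set of even cardinality, so $B_k$ exists). Then $B_k$ has the same archimedean ramification as $B$ --- so it is not totally definite and is split at exactly $c$ real places of $K_0$ --- while $\Ram(B_k\otimes_{K_0}K)=\Ram(A)$: at places of $K$ not above $v_{2k-1}$ or $v_{2k}$ the local invariants agree with those of $B\otimes_{K_0}K\cong A$, and above $v_{2k-1}$ and $v_{2k}$ the invariant is $2\cdot\frac12=0$ in $\bfQ/\bfZ$ while the place is not in $\Ram(A)$. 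Hence $A\cong B_k\otimes_{K_0}K$, and via Theorem~\ref{theorem:sublattices} each $B_k$ determines a commensurability class of arithmetic sublattices of $\Gamma$ in $\SL_2(\bfR)^c\times\SL_2(\bfC)^d$.

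Finally, the $B_k$ have pairwise distinct ramification sets and so are pairwise non-isomorphic as $K_0$-algebras; since $\Aut(K_0/\bfQ)$ is finite, for each $k$ at most $\#\Aut(K_0/\bfQ)$ of the $B_j$ can be $\Aut(K_0/\bfQ)$-isomorphic to $B_k$, so infinitely many $\Aut(K_0/\bfQ)$-isomorphism classes occur among them. Choosing one sublattice from each of the resulting commensurability classes produces the claimed infinite family of pairwise incommensurable arithmetic lattices in $\SL_2(\bfR)^c\times\SL_2(\bfC)^d$ contained in $\Gamma$. I expect the heart of the matter --- the only place where both hypotheses on $K/K_0$ are used --- to be the second paragraph: Galois-ness forces the local degree to be constant above each place $v$ of $K_0$, and even degree supplies infinitely many finite places $v$ of even local degree, which may then be adjoined in pairs to $\Ram(B)$ without changing the Brauer class of $B\otimes_{K_0}K$; the rest is routine bookkeeping with Hasse invariants. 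It is essential that the chosen $v_i$ be finite, so that the archimedean signature --- hence $c$ and $d$ --- is unchanged.
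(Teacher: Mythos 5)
Your proposal is correct and follows essentially the same route as the paper: the paper's Lemma \ref{lem:EvenLocal} is exactly your Chebotarev step producing infinitely many places of $K_0$ with even local degree in $K$, and the paper likewise adjoins pairs of such places to $\Ram(B)$ and checks via Hasse invariants that the base change to $K$ is unchanged. Your additional bookkeeping (discarding places meeting $\Ram(B)$, and quotienting by the finite $\Aut(K_0/\bfQ)$-action to still get infinitely many classes) fills in details the paper leaves implicit.
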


Before proving this, we need the following lemma.

\begin{lem}\label{lem:EvenLocal}
Let $K / K_0$ be a finite Galois extension of number fields with even degree. There exist infinitely many places $\mathfrak{p}$ of $K_0$ that are unramified in $K$ such that the local extension $K_{\mathfrak{q}} / (K_0)_{\mathfrak{p}}$ has even degree for every prime $\mathfrak{q}$ of $K$ over $\mathfrak{p}$.
\end{lem}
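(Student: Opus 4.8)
The plan is to reduce the statement to a density/Chebotarev argument about Frobenius elements in $\Gal(K/K_0)$. First I would recall the standard fact: for a prime $\mathfrak{p}$ of $K_0$ unramified in $K$, if $\sigma \in \Gal(K/K_0)$ denotes a Frobenius element at some prime $\mathfrak{q} \mid \mathfrak{p}$, then the residue degree $f(\mathfrak{q}/\mathfrak{p})$ equals the order of $\sigma$, and since $\mathfrak{p}$ is unramified this residue degree is exactly the local degree $[K_{\mathfrak{q}} : (K_0)_{\mathfrak{p}}]$. Moreover the Frobenius elements at the various primes $\mathfrak{q}$ over $\mathfrak{p}$ form a single conjugacy class in $\Gal(K/K_0)$, so all of them have the same order. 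Thus the condition ``$[K_{\mathfrak{q}} : (K_0)_{\mathfrak{p}}]$ is even for every $\mathfrak{q} \mid \mathfrak{p}$'' is equivalent to ``the Frobenius conjugacy class at $\mathfrak{p}$ consists of elements of even order.''

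Next I would produce an element of even order in $G = \Gal(K/K_0)$. Since $|G| = [K : K_0]$ is even, Cauchy's theorem gives an element $\sigma \in G$ of order $2$ (or more simply, a Sylow $2$-subgroup is nontrivial and contains an element of order $2$); in particular $\sigma$ has even order. Then by the Chebotarev density theorem, the set of primes $\mathfrak{p}$ of $K_0$, unramified in $K$, whose Frobenius conjugacy class is the conjugacy class of $\sigma$ has density $|\mathrm{conj}(\sigma)|/|G| > 0$, hence is infinite. For every such $\mathfrak{p}$, every prime $\mathfrak{q}$ of $K$ over $\mathfrak{p}$ has Frobenius conjugate to $\sigma$, so $[K_{\mathfrak{q}} : (K_0)_{\mathfrak{p}}] = \mathrm{ord}(\sigma)$ is even. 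This gives the infinitely many primes $\mathfrak{p}$ asserted in the lemma.

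I expect no serious obstacle here; the only thing to be slightly careful about is the translation between the local degree at $\mathfrak{q}$ and the order of Frobenius, which requires $\mathfrak{p}$ unramified (so that local degree $=$ residue degree $= \mathrm{ord}(\Frob_{\mathfrak{q}})$), and the fact that Chebotarev only excludes the finitely many ramified primes anyway, so imposing ``unramified in $K$'' costs nothing. One could alternatively phrase the whole argument without naming Frobenius, by applying Chebotarev to the cyclic subextension $K^{\langle\sigma'\rangle} \subset K$ where $\sigma'$ has order exactly $2$: primes of $K_0$ that are inert (or at least have even residue degree) in an appropriate subfield work, but the Frobenius formulation is cleanest and most robust when $G$ is nonabelian. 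Either way, the key input is Chebotarev density plus the elementary group-theoretic fact that a group of even order has an element of even order.
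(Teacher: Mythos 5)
Your proof is correct and follows essentially the same route as the paper: take an element $\sigma$ of order two in $\Gal(K/K_0)$ (which exists since the group has even order), apply Chebotarev to get infinitely many unramified primes $\mathfrak{p}$ with Frobenius class that of $\sigma$, and conclude that every $\mathfrak{q}$ over such a $\mathfrak{p}$ has $[K_{\mathfrak{q}}:(K_0)_{\mathfrak{p}}]=2$. Your added remarks on the equivalence of local degree, residue degree, and the order of Frobenius make the argument, if anything, slightly more explicit than the paper's.
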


\begin{proof}
Let $G = \Gal(K / K_0)$. Since $G$ has even order, it contains an element $\sigma$ of order two. It follows from the Cebotarev density theorem that there are infinitely many primes $\mathfrak{p}$ of $K_0$ such that every prime $\mathfrak{q}$ of $K$ above $\mathfrak{p}$ has Frobenius element a conjugate of $\sigma$. Since only finitely many primes of $K_0$ ramify in $K$, we can always assume that $\mathfrak{p}$ is unramified. With these assumptions we have $[K_{\mathfrak{q}} : (K_0)_{\mathfrak{p}}] = 2$ for every $\mathfrak{q}$ over $\mathfrak{p}$. The lemma follows.
\end{proof}

We now prove Theorem \ref{theorem:infinitelymanysublattices}.

\begin{proof}[Proof of Theorem \ref{theorem:infinitelymanysublattices}]
Let $\Sigma, \Gamma$ be as in the statement of the theorem. It follows from the theory developed in \cite{MR} (see also the discussion in the proof of \cite[Thm.\ 3.1]{CS}) that the inclusion $\Sigma \subset \Gamma$ induces an embedding of quaternion algebras $B\hookrightarrow A$. It follows that $A\cong B\otimes_{K_0}K$. By Theorem \ref{theorem:sublattices}, it suffices to show that there exist infinitely many quaternion algebras $B'$ over $K_0$, that are pairwise $\Aut(K_0/\bfQ)$-non-conjugate, have the same archimedean ramification as $B$, and all satisfy $A\cong B'\otimes_{K_0}K$.

By Lemma \ref{lem:EvenLocal}, there are infinitely many primes $\frakp$ of $K_0$ for which $[K_{\mathfrak{q}} : (K_0)_{\mathfrak{p}}]$ is even degree for every prime ideal $\mathfrak{q}$ of $K$ dividing $\mathfrak{p}$ and $\mathfrak{p}$ is totally unramified in $K$. Pick two such primes $\mathfrak{p}_1$ and $\mathfrak{p}_2$ of $K_0$.

Let $B'$ be the quaternion algebra over $K_0$ for which $\Ram(B')=\Ram(B)\cup \{\frakp_1,\frakp_2\}$. Note that by Reiner \cite[Thm.\ 31.9]{reiner}, the Hasse invariant of $B'\otimes_{K_0} K$ at a prime $\frakq_i$ of $K$ lying above $\frakp_i$ satisfies
\[
\inv_{\frakq_{i}}(B'\otimes_{K_0} K) = [K_{\frakq_i}:(K_0)_{\frakp_i}] \inv_{\frakp_i}(B') = 1 \in  \bfQ / \bfZ,
\]
since $[K_{\frakq_i}:(K_0)_{\frakp_i}]$ is even. This shows that $B'\otimes_{K_0} K$ and $B\otimes_{K_0}K$ are ramified at precisely the same places of $K$, and hence are both isomorphic to $A$. There are infinitely many possibilities for $\frakp_1,\frakp_2$ and hence for $B'$, which completes the proof of the theorem.
\end{proof}

The following is an easy consequence of Theorems \ref{theorem:sublattices} and \ref{theorem:infinitelymanysublattices}.

\begin{cor}\label{cor:2^n}
If $K$ is an abelian extension of $\bfQ$ and $\Gamma\subset \SL_2(\bfR)^{2 n}$ is an irreducible lattice arising from a quaternion algebra over $K$ and that contains an arithmetic sublattice $\Sigma\subset \SL_2(\bfR)^m$ for some $m\vert n$, then $\Gamma$ contains infinitely many pairwise incommensurable arithmetic sublattices arising from some embedding of $\SL_2(\bfR)^m$ into $\SL_2(\bfR)^{2 n}$.
\end{cor}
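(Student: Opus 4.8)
The plan is to deduce this directly from Theorems \ref{theorem:sublattices} and \ref{theorem:infinitelymanysublattices}; the only real content is verifying that the hypotheses of the latter are met, which comes down to a short bookkeeping computation with archimedean places.

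First I would fix notation: let $A$ be the quaternion algebra over $K$ from which $\Gamma$ arises, and let $\Sigma$ arise from a quaternion algebra $B$ over its center $K_0$. As in the proof of Theorem \ref{theorem:infinitelymanysublattices}, the inclusion $\Sigma\subset\Gamma$ forces an embedding of quaternion algebras $B\hookrightarrow A$; consequently $K_0$ is identified with a subfield of $K$ and $A\cong B\otimes_{K_0}K$. Since $\Gamma$ and $\Sigma$ have no $\SL_2(\bfC)$-factors, both $K$ and $K_0$ are totally real.

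Next I would pin down $[K:K_0]$. Applying the degree formula of Theorem \ref{theorem:sublattices} with $(a,b)=(2n,0)$ and $(c,d)=(m,0)$ — equivalently, running the totally real computation of Example \ref{example:totallyreal} — every real place of $K_0$ splits completely into real places of $K$, so $r_K(v_j)=[K:K_0]$ for every $v_j\in\Ram_\infty(B)$, and the formula collapses to $[K:K_0]=2n/m$. The hypothesis $m\mid n$ then makes $[K:K_0]=2n/m$ an even integer.

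Finally, since $K/\bfQ$ is abelian, every intermediate extension — in particular $K/K_0$ — is Galois (indeed abelian). Thus $K/K_0$ is a Galois extension of even degree, $\Gamma$ arises from $A\cong B\otimes_{K_0}K$, and $\Gamma$ contains the arithmetic sublattice $\Sigma\subset\SL_2(\bfR)^m$ arising from $B$: these are exactly the hypotheses of Theorem \ref{theorem:infinitelymanysublattices}, which yields infinitely many pairwise incommensurable arithmetic lattices in $\SL_2(\bfR)^m$ inside $\Gamma$, each included via the embedding of $\SL_2(\bfR)^m$ into $\SL_2(\bfR)^{2n}$ determined by its splitting data. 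I do not anticipate a genuine obstacle here; the only points requiring care are confirming that the degree formula really reduces to $2n/m$ (so that $m\mid n$ forces evenness) and that the Galois property descends from $K/\bfQ$ to $K/K_0$.
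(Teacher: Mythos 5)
Your proposal is correct and is exactly the argument the paper intends: the paper offers no explicit proof, stating only that the corollary is "an easy consequence of Theorems \ref{theorem:sublattices} and \ref{theorem:infinitelymanysublattices}," and the content you supply --- the reduction of the degree formula to $[K:K_0]=2n/m$ via Example \ref{example:totallyreal}, the evenness from $m\mid n$, and the descent of the Galois property from the abelian extension $K/\bfQ$ to $K/K_0$ --- is precisely the intended bookkeeping. No gaps.
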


We conclude this section by showing that the phenomenon described above, in which an arithmetic lattice containing a single commensurability class of arithmetic sublattices with positive codimension in fact contains infinitely many, is not true in general. We first study the case where $K / K_0$ is even degree, but not Galois.

\begin{thm}\label{thm:A5}
There exist infinitely many commensurability classes of irreducible arithmetic lattices acting on $(\mathbf{H}^2)^6$ containing exactly one commensurability class of arithmetic Fuchsian subgroups and no other arithmetic sublattices.
\end{thm}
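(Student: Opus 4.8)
The plan is to produce, for each suitable choice of ground data, an explicit quaternion algebra $A$ over a degree-$6$ field $K$ such that exactly one subfield $K_0 \subset K$ gives rise to arithmetic sublattices, and that subfield determines a unique commensurability class. The natural way to force "no other arithmetic sublattices'' is to choose $K / \bfQ$ with automorphism group structure so restrictive that $K$ has essentially one proper subfield that could work at all. Concretely, I would take $K$ to be a degree-$6$ totally real field whose Galois closure has group $S_5$ acting on $K$ via the action on $5+1$... wait, more to the point: the section heading in the excerpt points at $A_5$, so I would take $K$ to be a degree-$6$ extension whose Galois closure $\widetilde{K}/\bfQ$ has $\Gal(\widetilde{K}/\bfQ) \cong A_5$ with $K$ the fixed field of a point stabilizer $A_4 \le A_5$ (index $5$)—no, index $6$: I want $A_5$ acting on the $6$ cosets of a subgroup of order $10$ (a dihedral $D_5$, i.e. the normalizer of a Sylow $5$-subgroup). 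That action is the action of $A_5 \cong \mathrm{PSL}_2(\bfF_5)$ on $\bfP^1(\bfF_5)$, which is $2$-transitive. The key group-theoretic input is that the point stabilizer $D_5$ is a maximal subgroup of $A_5$, so $K$ has \emph{no} proper nontrivial subfields other than $\bfQ$ itself; hence by Theorem \ref{theorem:sublattices} and Example \ref{example:totallyreal}, the only candidate for $K_0$ producing a positive-codimension arithmetic sublattice is $K_0 = \bfQ$, forcing $(c,d) = (1,0)$ and $[K:\bfQ] = 6$, i.e.\ Fuchsian subgroups of $\SL_2(\bfR)^6$.

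Next I would arrange the quaternion algebra. Choose $A$ over $K$ totally definite at... no: $K$ is totally real with $6$ real places, $\Gamma \subset \SL_2(\bfR)^6$ irreducible, so $A$ must be split at exactly one archimedean place and ramified (Hamilton) at the other five; I then need $\Ram_f(A)$ chosen so that Lemma \ref{lem:WhenEmbed?} is satisfiable with $K_0 = \bfQ$. Since $K/\bfQ$ is not Galois, the local degrees $[K_w : \bfQ_p]$ vary with $w \mid p$, and I must pick a rational prime $p$ that splits in $K$ with a decomposition type making all the conditions (i), (ii), (iii) of Lemma \ref{lem:WhenEmbed?} hold; e.g.\ a prime $p$ inert in $K$ (local degree $6$, even—bad) won't do, so I want $p$ with a factorization into primes all of \emph{odd} residue/local degree, which for the $A_5$-on-$\bfP^1(\bfF_5)$ permutation representation corresponds to conjugacy classes of $A_5$ all of whose cycle lengths on $6$ points are odd: the identity ($1^6$), the $5$-cycles ($1 \cdot 5$), and the $3$-cycles ($1 \cdot 3 \cdot 1 \cdot 1$, i.e.\ $3 \cdot 1^3$)—the double transpositions give $2^2 \cdot 1^2$ (even cycles, bad). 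Taking $\Frob_p$ a $3$-cycle gives decomposition type $(3,1,1,1)$ in $K$: one prime $w_0$ of local degree $3$ (odd) and three primes of degree $1$ (odd), all odd. Then by Lemma \ref{lem:WhenEmbed?}(ii) applied with $v = (p)$, \emph{every} prime of $K$ over $p$ must lie in $\Ram(A)$ or none; and condition (iii) is a global parity constraint on the set of rational primes under $\Ram(A)$. So I would pick two such rational primes $p_1, p_2$ (Frobenius a $3$-cycle), set $\Ram_f(A) = \{$all primes of $K$ over $p_1\} \cup \{$all primes of $K$ over $p_2\}$, and check this is an even-size set of primes of $K$ (it has $4+4 = 8$ elements—even, good, so $A$ exists) and that Lemma \ref{lem:WhenEmbed?}(iii) holds with $K_0 = \bfQ$ (the set of rational primes is $\{p_1, p_2\}$, cardinality $2$, even).

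Then the argument concludes as follows. By Lemma \ref{lem:WhenEmbed?}, $K_0 = \bfQ$ determines a nonempty set of sublattices, and by the rigidity of the parametrization there is a \emph{unique} $\bfQ$-quaternion algebra $B$ with $A \cong B \otimes_\bfQ K$ and the correct archimedean ramification (namely $\Ram(B) = \{p_1, p_2, \infty\}$ up to adjusting the archimedean place); since $\Aut(\bfQ/\bfQ)$ is trivial, this is one $\Aut(K_0/\bfQ)$-isomorphism class, hence exactly one commensurability class of arithmetic Fuchsian subgroups by Theorem \ref{theorem:sublattices}. Because $K$ has no proper subfield strictly between $\bfQ$ and $K$, and $K$ itself gives only $\Gamma$ (not a proper sublattice), there are no other arithmetic sublattices of positive codimension. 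Finally, to get \emph{infinitely many} commensurability classes of such $\Gamma$: vary the field $K$ over the infinitely many degree-$6$ fields with Galois group $A_5$ in the $\bfP^1(\bfF_5)$ action—such fields are known to exist in abundance (e.g.\ by Hilbert irreducibility / known realizations of $A_5$), and fields with distinct discriminants give non-isomorphic $K$, hence incommensurable $\Gamma$ by the commensurability criterion recalled in Section \ref{section:construction}; within a fixed $K$ one can also vary the pair $p_1, p_2$ to get further (though the cross-$K$ variation already suffices). The main obstacle I anticipate is the group theory/field theory bookkeeping: verifying that $A_5$ in its degree-$6$ transitive action has point stabilizer $D_5$ maximal (so $K$ has no intermediate subfields), and simultaneously that there exist enough rational primes whose Frobenius is a $3$-cycle and that the resulting ramification set meets \emph{all} of the parity and "all-or-none'' conditions of Lemma \ref{lem:WhenEmbed?}; a secondary point is ensuring infinitely many such $K$ exist with the specified Galois action, which requires citing a suitable inverse Galois / field-counting result.
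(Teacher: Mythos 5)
Your overall setup matches the paper's: a totally real sextic $K$ whose Galois closure has group $A_5$ with point stabilizer the maximal subgroup $D_5$, so that $\bfQ$ is the only proper subfield and the only possible positive-codimension sublattices are Fuchsian. But the heart of the theorem --- that there is exactly \emph{one} commensurability class, i.e.\ a \emph{unique} $\bfQ$-quaternion algebra $B'$ with $B'\otimes_\bfQ K\cong A$ --- is asserted by you as ``the rigidity of the parametrization'' and never proved. There is no such rigidity in general: Theorem \ref{theorem:infinitelymanysublattices} exists precisely because $B\otimes_{K_0}K\cong B'\otimes_{K_0}K$ typically does \emph{not} force $B\cong B'$. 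The paper's actual argument is: (a) every rational prime unramified in $K$ has at least one prime of $K$ of odd local degree above it (this follows from the conjugacy-class table, since every cycle type $1^6$, $1^2 2^2$, $1\cdot 5$, $3^2$ contains an odd part), so any prime of $\Ram(B')$ not lying under $\Ram(A)$ must ramify in $K/\bfQ$; (b) for the explicit field the ramified primes are $19$ and $293$, parity then forces $\Ram(B')=\mathcal S$ or $\mathcal S\cup\{19,293\}$, and the latter is killed because $293$ has a degree-one prime of $K$ above it. You never engage with the primes ramifying in $K/\bfQ$, which is exactly where a second algebra $B'$ could hide; without step (b) the count of commensurability classes could be two rather than one.

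There are also two concrete errors. First, for $\Gamma\subset\SL_2(\bfR)^6$ the algebra $A$ must be split at \emph{all six} real places of $K$, not ``split at exactly one archimedean place and ramified at the other five'' (that would give a Fuchsian group, and moreover would make condition (ii) of Lemma \ref{lem:WhenEmbed?} unsatisfiable at the archimedean place); correspondingly $\Ram(B)=\{p_1,p_2\}$ --- your proposed $\{p_1,p_2,\infty\}$ has odd cardinality and is not the ramification set of any quaternion algebra. Second, a Frobenius that is a $3$-cycle gives decomposition type $(3,3)$ in $K$, not $(3,1,1,1)$: the stabilizer $D_5$ has order $10$ and contains no element of order $3$, so a $3$-cycle fixes no coset. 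This particular slip does not break the existence of $A$ (four primes of odd local degree $3$ is still an even ramification set satisfying Lemma \ref{lem:WhenEmbed?}), but it signals that the local analysis --- which is what the uniqueness argument lives on --- was not carried out correctly. Your closing suggestion to vary $K$ over many $A_5$-sextics would also require redoing the ramified-prime analysis for each field; the paper's cleaner route is to fix one $K$ and vary the even set $\mathcal S$ of rational primes.
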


\begin{proof}
Let $K$ be the field with minimal polynomial
\[
f(t) = t^6 - 10 t^4 + 7 t^3 + 15 t^2 - 14 t + 3.
\] This is a totally real sextic extension of $\bfQ$ with Galois group $A_5$. If $L / \bfQ$ is the Galois closure, then
\begin{align*}
\Gal(L / \bfQ) &\cong A_5 \\
\Gal(L / K) &\cong D_5.
\end{align*}

Let $p$ be a rational prime that is unramified in $K$, and $\mathfrak{p}_1, \dots, \mathfrak{p}_r$ be the primes of $K$ above $p$. Considering the conjugacy classes of elements in $A_5$ and the intersection of each conjugacy class with $D_5$, we only have the following possibilities for the local completions:
\begin{enumerate}

\item $r = 6$ and
\begin{align*}
[K_{\mathfrak{p}_i} : \bfQ_p] = 1& &i = 1,\dots,6
\end{align*}

\item $r = 4$ and
\begin{align*}
[K_{\mathfrak{p}_i} : \bfQ_p] = 1& &i = 1, 2 \\
[K_{\mathfrak{p}_i} : \bfQ_p] = 2& &i = 3, 4
\end{align*}

\item $r = 2$ and
\begin{align*}
[K_{\mathfrak{p}_1} : \bfQ_p] = 1 \\
[K_{\mathfrak{p}_2} : \bfQ_p] = 5
\end{align*}

\item $r = 2$ and
\[
[K_{\mathfrak{p}_i} : \bfQ_p] = 3 \quad i = 1, 2
\]

\end{enumerate}

In particular, we see that for every rational prime $p$ that is unramified in $K$, there exists a prime $\mathfrak{p}$ of $K$ above $p$ with $[K_{\mathfrak{p}} : \bfQ_p]$ of odd degree. Recall from the proof of Theorem \ref{theorem:infinitelymanysublattices} that if $B$ is a quaternion algebra over $\bfQ$ and $A = B \otimes_\bfQ K$, then
\[
\inv_{\frakp}(A) = [K_{\frakp}:\bfQ_p] \inv_p(B) \in \frac{1}{2} \bfZ / \bfZ.
\]
Consequently, if $p \in \mathrm{Ram}(B)$ is a rational prime that is unramified in $K$, then there exists a prime $\mathfrak{p}$ of $K$ above $p$ such that $A$ ramifies at $\mathfrak{p}$.

Given a finite set $\mathcal{S}$ of rational primes with even cardinality, containing no primes that ramify in $K$, let $B_{\mathcal{S}}$ denote the unique $\bfQ$-quaternion algebra that ramifies at precisely the places in $\mathcal{S}$ and
\[
A_{\mathcal{S}} = B_{\mathcal{S}} \otimes_\bfQ K.
\]
Notice that the pair $(\mathbb Q, B_{\mathcal{S}})$ determines a commensurability class of arithmetic Fuchsian groups.

Suppose that $B'$ is another $\bfQ$-quaternion algebra such that $A_{\mathcal{S}}  \cong B' \otimes_\bfQ K$. Then it is not hard to show that $\mathcal{S} \subseteq \mathrm{Ram}(B')$. Indeed, every prime of $K$ at which $A_{\mathcal{S}}$ ramifies lies above a prime in $\mathcal{S}$. It is also not hard to see that the infinite place $v_\infty$ of $\bfQ$ cannot be in $\mathrm{Ram}(B')$, since $K$ is totally real and $A_{\mathcal{S}}$ is unramified at all the real places of $K$. In particular, we see that if $p \in \mathrm{Ram}(B') \smallsetminus \mathcal{S}$, then $p$ ramifies in $K$.

With the choice of $f(t)$ as above, the rational primes that ramify in $K$ are $19$ and $293$. Since $\mathrm{Ram}(B')$ has even cardinality, we see that either $\mathrm{Ram}(B') = \mathcal{S}$, hence $B \cong B'$, or
\[
\mathrm{Ram}(B') = \mathcal{S} \cup \{19, 293\}.
\]
However, there is a prime $\mathfrak{p}$ of $K$ above $293$ of degree $1$, i.e., such that $K_{\mathfrak{p}} \cong \bfQ_{293}$, and it follows that $B' \otimes_{\bfQ} K$ also ramifies at $\mathfrak{p}$, which is a contradiction.

To recap, let $K$ be the totally real sextic extension of $\bfQ$ with minimal polynomial $f(t)$ as above and $\mathcal{S}$ any finite set of rational primes of even cardinality not containing $19$ or $293$. Let $B_{\mathcal{S}}$ be the unique $\bfQ$-quaternion algebra ramified exactly at the primes in $\mathcal{S}$, and $A_{\mathcal{S}}$ be the base change of $B_{\mathcal{S}}$ from $\bfQ$ to $K$. We have shown
\[
A_{\mathcal{S}} \cong B' \otimes_{\bfQ} K
\]
for some $\bfQ$-quaternion algebra $B'$ if and only if $B' \cong B_{\mathcal{S}}$.

Then $A_{\mathcal{S}}$ determines a commensurability class of arithmetic lattices in $\SL_2(\bfR)^6$, and the above proves that it contains a unique commensurability class of arithmetic Fuchsian subgroups. To see that there are no other arithmetic sublattices, it suffices to notice that $K$ contains no proper subfield other than $\bfQ$. This completes the proof.
\end{proof}

We now study the odd-degree cyclic Galois case.

\begin{thm}\label{theorem:finitelymany}
Let $K/\bfQ$ be a totally real cyclic Galois extension of odd degree $n>1$. There exist infinitely many pairwise incommensurable irreducible lattices $\Gamma\subset \SL_2(\bfR)^n$ with invariant trace field $K$ whose arithmetic sublattices lie in precisely $\tau(n)$ commensurability classes, where $\tau$ denotes the divisor function.
\end{thm}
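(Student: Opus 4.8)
The plan is to realise all the required lattices by base change from $\bfQ$, and to use in an essential way that $n$ is odd in order to pin down the quaternion subalgebras exactly.

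\emph{Input from the field theory.} Since $K/\bfQ$ is cyclic of degree $n$, its subfields are the fields $K_d$ for $d\mid n$, with $[K_d:\bfQ]=d$; there are exactly $\tau(n)$ of them, each is Galois over $\bfQ$, and each $K/K_d$ is cyclic of degree $n/d$. The point of the odd-degree hypothesis is the following: for any subfield $K_0=K_d$, any rational prime $p$, any place $v$ of $K_0$ above $p$ and any place $w$ of $K$ above $v$, the local degrees $[(K_0)_v:\bfQ_p]$ and $[K_w:(K_0)_v]$ divide $d$ and $n/d$ respectively (both $K_0/\bfQ$ and $K/K_0$ being Galois), hence are odd; consequently $[K_w:\bfQ_p]$ is odd. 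In particular, \emph{every} place of $K_0$ is divisible by a place of $K$ of odd local degree.

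\emph{Construction and the count $\tau(n)$.} Fix a two-element set $\mathcal S=\{p_1,p_2\}$ of rational primes (unramified in $K$, say) and let $B_0$ be the indefinite $\bfQ$-quaternion algebra with $\Ram(B_0)=\mathcal S$. As in Section \ref{section:construction}, $A=B_0\otimes_\bfQ K$ gives a cocompact irreducible arithmetic lattice $\Gamma=\pi(\mathcal O_A^1)\subset\SL_2(\bfR)^n$ with invariant trace field $K$, and by \cite[Thm.\ 31.9]{reiner} together with the parity remark above one gets $\Ram(A)=\{w\text{ of }K: w\mid p_1\text{ or }w\mid p_2\}$. By Theorem \ref{theorem:sublattices}, the commensurability classes of arithmetic sublattices of $\Gamma$ correspond bijectively to the pairs $(K_0,[B])$ where $K_0\subseteq K$ and $B$ is a $K_0$-quaternion algebra with $A\cong B\otimes_{K_0}K$, taken up to $\Aut(K_0/\bfQ)$. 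For each subfield $K_0$, the algebra $B_0\otimes_\bfQ K_0$ is such a $B$, so each of the $\tau(n)$ subfields contributes at least one class. It contributes exactly one: by the argument used to prove Lemma \ref{lem:WhenEmbed?}, combined with the fact just noted that every place of $K_0$ is divisible by an odd-degree place of $K$, any $B$ with $A\cong B\otimes_{K_0}K$ must satisfy
\[
\Ram(B)=\{v\text{ of }K_0: v\text{ is divisible by some }w\in\Ram(A)\},
\]
which is one fixed set, of even cardinality since the base change $B_0\otimes_\bfQ K_0$ is a quaternion algebra with exactly this ramification set; so $B$ is uniquely determined. Distinct subfields of $K$ have distinct degrees, so they give incommensurable sublattices, and $K_0=K$ contributes the commensurability class of $\Gamma$ itself. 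Hence the arithmetic sublattices of $\Gamma$ lie in exactly $\tau(n)$ commensurability classes.

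\emph{Infinitely many such $\Gamma$.} Now let $\mathcal S$ range over the (infinitely many) two-element sets of rational primes unramified in $K$. Since $\Aut(K/\bfQ)=\Gal(K/\bfQ)$ fixes every rational prime and only permutes the places of $K$ above it, the set $\mathcal S$ is recovered from $\Ram(A)=\{w:w\mid p,\ p\in\mathcal S\}$ even after applying any element of $\Aut(K/\bfQ)$; so distinct $\mathcal S$ give quaternion algebras $A$ that are pairwise non-$\Aut(K/\bfQ)$-isomorphic, and the resulting lattices $\Gamma$ are pairwise incommensurable by the criterion recalled in Section \ref{section:construction}. Each has invariant trace field $K$, and by the previous paragraph each has arithmetic sublattices in exactly $\tau(n)$ commensurability classes.

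\emph{The main obstacle.} The heart of the argument is the uniqueness of $B$, and this is precisely where oddness of $[K:K_0]$ cannot be dropped: when $[K:K_0]$ is even there are primes of $K_0$ all of whose places in $K$ have even local degree, and enlarging $\Ram(B)$ at such primes — the mechanism behind Theorem \ref{theorem:infinitelymanysublattices} — produces infinitely many classes. So the work is to confirm carefully that no place of $K_0$ is ``totally even'' in $K$ (immediate from the parity remark) and that condition (iii) of Lemma \ref{lem:WhenEmbed?} is satisfied automatically, so that the uniquely determined candidate algebra $B$ genuinely exists for every subfield.
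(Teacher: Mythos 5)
Your proof is correct and takes essentially the same approach as the paper: fix a pair of rational primes, arrange for $A$ to ramify at exactly the places of $K$ above them, and use the oddness of every local degree in the Galois tower $K/K_0/\bfQ$ to force $\Ram(B)$ for each of the $\tau(n)$ subfields, yielding exactly one class per subfield. The only cosmetic difference is that the paper chooses $p,q$ split completely in $K$ and specifies $\Ram(A)$ directly rather than obtaining $A$ and each $B$ by base change of a single $\bfQ$-algebra $B_0$; the uniqueness argument is identical.
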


\begin{proof}
Let $p,q$ be distinct primes that split completely in $K/\bfQ$, let $\frakp_1,\dots,\frakp_n$ be the prime divisors in $K$ of $p\mathcal O_K$, and let $\frakq_1,\dots,\frakq_n$ be the prime divisors in $K$ of $q\mathcal O_K$. Consider the quaternion algebra $A$ over $K$ that ramifies precisely at the primes $\frakp_1,\dots,\frakp_n,\frakq_1,\dots,\frakq_n$. If $\mathcal{O}$ is a maximal order of $A$, then $\Gamma = \pi(\mathcal O^1)$ is an irreducible lattice in $\SL_2(\bfR)^{n}$. We will show that the arithmetic sublattices in $\Gamma$ with positive codimension belong to precisely $\tau(n)-1$ commensurability classes. Since infinitely many primes split completely in $K/\bfQ$, our assertion about the existence of infinitely many incommensurable arithmetic lattices $\Gamma$ with the requisite properties will follow.

Example \ref{example:totallyreal} shows that any positive codimension arithmetic sublattice of $\Gamma$ lies in $\SL_2(\bfR)^d$ for some divisor $d$ of $n$. Fix one such divisor $d$ and observe that there is a unique subfield $K_0$ of $K$ for which $[K:K_0]=\frac{n}{d}$. By Theorem \ref{theorem:sublattices} it suffices to show that there is a unique quaternion algebra $B$ defined over $K_0$ such that $A\cong B\otimes_{K_0}K$.

To begin, let $B$ be the quaternion algebra over $K_0$ that is unramified at all archimedean places of $K_0$ and in which all prime divisors of $p\mathcal O_{K_0}$ and $q\mathcal O_{K_0}$ ramify. To show that $B\otimes_{K_0}K$ is isomorphic to $A$ we must show that the only nontrivial Hasse invariants of $B\otimes_{K_0}K$ are associated with the prime divisors of $p\mathcal O_{K}$ and $q\mathcal O_{K}$. Let $\frakp$ be a prime divisor of $p\mathcal O_{K_0}$ or $q\mathcal O_{K_0}$ and $\frakP$ be a prime of $K$ lying above $\frakp$. Since $p$ and $q$ both split completely in $K/\bfQ$, it follows that $\frakp$ splits completely in $K/K_0$, hence $[K_\frakP:(K_0)_\frakp]=1$. We now conclude that
\[
\inv_\frakP(B\otimes_{K_0}K)=[K_\frakP:(K_0)_\frakp]\inv_{\frakp}(B)=1\cdot \frac{1}{2}=\frac{1}{2} \in \frac{1}{2} \bfZ / \bfZ.
\]
Conversely, let $\mathfrak{L}$ be a prime of $K$ not lying above $p$ or $q$ and let $\mathfrak{l}=\mathfrak{L}\cap K_0$. Then
\[
(B\otimes_{K_0}K)\otimes_K K_\mathfrak{L}\cong(B\otimes_{K_0}K_\mathfrak{l})\otimes_{K_\mathfrak{l}} K_\mathfrak{L}\cong \mathrm{M}_2(K_\mathfrak{l})\otimes_{K_\mathfrak{l}} K_\mathfrak{L}\cong \mathrm{M}_2(K_\mathfrak{L}),
\]
showing that $B\otimes_{K_0}K$ is unramified at $\mathfrak{L}$. This shows that $A\cong B\otimes_{K_0}K$.

Now, suppose that $B'$ is a quaternion algebra over $K_0$ such that $A\cong B'\otimes_{K_0}K$. No real place of $K_0$ that ramifies in $B'$ can extend to a real place of $K$ splitting in $B'\otimes_{K_0}K$, as this would induce an embedding of Hamilton's quaternions into $\mathrm{M}_2(\bfR)$. Since $K$ (and hence $K_0$) is totally real and $A$ is split at all archimedean places, it follows that $B'$ is split at all archimedean places of $K_0$. If $\frakp$ is a prime of $K$ lying above $p$ or $q$ then the above argument shows that $B'$ must be ramified at $\frakp$.

We now show that if $\mathfrak{l}$ is a prime of $K_0$ not lying above $p$ or $q$ then $\mathfrak{l}$ is unramified in $B'$. Suppose to the contrary that such a prime $\mathfrak{l}$ ramifies in $B'$ and let $\mathfrak L$ be a prime of $K$ lying above $\mathfrak{l}$. Note that $\inv_\mathfrak{L}(A)=1$. Then
\[
1=\inv_\mathfrak{L}(A)=\inv_\mathfrak{L}(B'\otimes_{K_0}K)=[K_{\mathfrak{L}}:(K_0)_{\mathfrak{l}}]\inv_{\mathfrak l}(B')=[K_{\mathfrak{L}}:(K_0)_{\mathfrak{l}}]\cdot \frac{1}{2},
\]
forcing us to deduce that
\[
[K_{\mathfrak{L}}:(K_0)_{\mathfrak{l}}]=e(\mathfrak{L}/\mathfrak{l})f(\mathfrak{L}/\mathfrak{l})
\]
is even. However, this is impossible as $e(\mathfrak{L}/\mathfrak{l})f(\mathfrak{L}/\mathfrak{l})$ is a divisor of $\frac{n}{d}$ (since $K/K_0$ is Galois), which by hypothesis must be odd. We conclude that $B$ and $B'$ are ramified at precisely the same primes of $K_0$, hence they are isomorphic. It follows that $B$ is the unique quaternion algebra over $K_0$ for which $A\cong B\otimes_{K_0}K$, finishing the proof.
\end{proof}	

The following is an immediate consequence of Theorem \ref{theorem:finitelymany} and the fact that for every $m\geq 1$ there is an integer $n\geq 1$ such that $\tau(n)=m$.

\begin{cor}
For every positive integer $m\geq 1$ there exist infinitely many pairwise incommensurable irreducible lattices containing precisely $m$ commensurability classes of arithmetic sublattices.
\end{cor}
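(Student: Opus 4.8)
The plan is to read the statement off from Theorem~\ref{theorem:finitelymany} after one bookkeeping remark about the divisor function, treating $m=1$ separately since that theorem assumes the degree is $>1$.

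First I would record the elementary fact that for every $m\ge 1$ the integer $n=3^{m-1}$ is \emph{odd} and has exactly $m$ positive divisors (namely $1,3,\dots,3^{m-1}$), so $\tau(n)=m$, and moreover $n>1$ as soon as $m\ge 2$. Insisting on $n=3^{m-1}$ rather than on an arbitrary solution of $\tau(n)=m$ is the one place that requires care: Theorem~\ref{theorem:finitelymany} requires the degree to be odd, so that e.g.\ $n=2^{m-1}$ (also a solution of $\tau(n)=m$) is not admissible. Then, for $m\ge 2$, fix $n=3^{m-1}$ and choose a totally real cyclic Galois extension $K/\bfQ$ of degree $n$; such a field exists by class field theory over $\bfQ$, for instance as the unique degree-$n$ subfield of $\bfQ(\zeta_\ell)^{+}$ for any prime $\ell\equiv 1\pmod{2n}$ (infinitely many such $\ell$ exist by Dirichlet). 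Applying Theorem~\ref{theorem:finitelymany} to this $K$ yields infinitely many pairwise incommensurable irreducible lattices $\Gamma\subset\SL_2(\bfR)^{n}$ whose arithmetic sublattices fall into exactly $\tau(n)=m$ commensurability classes, which is the assertion for all $m\ge 2$.

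It remains to handle $m=1$. Take $\Gamma=\pi(\mathcal O^{1})$ for a maximal order $\mathcal O$ in a division quaternion algebra over $\bfQ$, an arithmetic Fuchsian group with invariant trace field $\bfQ$. Since $\bfQ$ has no proper subfield, Theorem~\ref{theorem:sublattices} implies that the only commensurability class of irreducible arithmetic sublattices of $\Gamma$ is the class of $\Gamma$ itself, so $\Gamma$ contains precisely $m=1$ such class. Varying the (finite, even-cardinality) set of ramified rational primes produces infinitely many non-isomorphic $\bfQ$-quaternion algebras, hence --- using the commensurability criterion recalled in Section~\ref{section:construction} together with the triviality of $\Aut(\bfQ/\bfQ)$ --- infinitely many pairwise incommensurable such $\Gamma$. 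Combined with the previous paragraph this establishes the corollary.

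I do not expect a genuine obstacle: the entire content is the surjectivity of $\tau$ onto the positive integers \emph{through odd values}, which matches the parity hypothesis of Theorem~\ref{theorem:finitelymany}, together with the trivial observation that a rank-one arithmetic lattice has no proper arithmetic sublattices up to commensurability.
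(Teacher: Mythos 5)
Your proof is correct and follows the same route as the paper, which deduces the corollary in one line from Theorem~\ref{theorem:finitelymany} together with the surjectivity of $\tau$ onto the positive integers. Your added care --- choosing $n=3^{m-1}$ so that $n$ is odd (as the theorem's hypothesis requires), and treating $m=1$ separately via $\bfQ$-quaternion algebras since the theorem assumes $n>1$ --- fills in two small points that the paper's one-line deduction glosses over, and both of your fixes are the natural ones.
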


We close with the following, which is well-known to experts.

\begin{lem}\label{lem:LotsOfDistinct}
Let $G$ be a semisimple Lie group of noncompact type with associated symmetric space $X$ and $Y \subset X$ be an embedded proper totally geodesic subspace associated with the subgroup
\[
\mathrm{Stab}_G(Y)= H \subset G.
\]
Suppose that $\Gamma$ is an arithmetic lattice in $G$ such that $\Lambda = \Gamma \cap H$ is a lattice in $H$. Then $X / \Gamma$ contains infinitely many distinct immersed totally geodesic submanifolds of the form $Y / \Lambda^\prime$ with $\Lambda^\prime$ a lattice in $H$ commensurable with $\Lambda$.
\end{lem}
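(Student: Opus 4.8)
The plan is to exploit the arithmetic structure of $\Gamma$ to produce infinitely many commensurable but distinct lattices $\Lambda'$ in $H$, each yielding a distinct immersed submanifold of $X/\Gamma$. First I would fix the setup: since $\Gamma$ is arithmetic and $\Lambda = \Gamma \cap H$ is a lattice in $H$, the subgroup $H$ is defined over $\bfQ$ (it is the real points of an algebraic subgroup), and the commensurator $\mathrm{Comm}_G(\Gamma)$ is dense in $G$ by the Margulis commensurator criterion for arithmeticity. More usefully, for each $g \in \mathrm{Comm}_G(H)$ (the commensurator of $H$ inside $G$, or even elements of $H$'s own commensurator intersected with the commensurator of $\Gamma$), the group $\Lambda_g := \Gamma \cap gHg^{-1}$ pushed back by conjugation gives a lattice in $H$ commensurable with $\Lambda$. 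I would then observe that the totally geodesic subspace $gY$ projects to an immersed totally geodesic submanifold of $X/\Gamma$ isometric to $Y/\Lambda'$ for $\Lambda' = g^{-1}(\Gamma \cap gHg^{-1})g$, which is commensurable with $\Lambda$.

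The key step is to show infinitely many of these are genuinely distinct as immersed submanifolds of $X/\Gamma$. I would argue as follows: the immersed submanifolds $\pi(gY)$ and $\pi(g'Y)$ (where $\pi\colon X \to X/\Gamma$) coincide if and only if $g'Y = \gamma g Y$ for some $\gamma \in \Gamma$, i.e., $g^{-1}\gamma^{-1}g' \in \mathrm{Stab}_G(Y) = H$ (using that $H$ is the full stabilizer of $Y$), equivalently $g' \in \Gamma g H$. So distinct immersed submanifolds correspond to distinct double cosets in $\Gamma \backslash \mathrm{Comm}_G(H) / H$, or more simply to distinct cosets in the image of $\{g\} \to \Gamma \backslash G / H$. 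Thus it suffices to produce infinitely many elements $g_i$ in a suitable commensurating set that lie in distinct $\Gamma$-$H$ double cosets. I would exhibit these concretely: working in the $\bfQ$-group picture, one can scale by primes, or use Hecke-type correspondences — for each prime $p$ not dividing the relevant level, there is an element $g_p$ in $G(\bfQ_p)$ (hence after using strong approximation, an element of $G(\bfQ)$ up to adjusting $\Gamma$) commensurating $\Gamma$ such that the index $[\Lambda : \Lambda \cap g_p\Lambda g_p^{-1}]$ grows with $p$, which forces the double cosets $\Gamma g_p H$ to be distinct for distinct $p$ once $p$ is large.

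An alternative and perhaps cleaner route avoids strong approximation entirely: since $\mathrm{Comm}_G(\Gamma)$ is dense in $G$ and $H$ is a proper closed subgroup with $\Lambda$ a lattice in $H$, the volumes $\mathrm{vol}(Y/\Lambda')$ over all $\Lambda'$ of the form $\gamma' H \gamma'^{-1} \cap \Gamma$ (conjugating back to $H$) with $\gamma' \in \mathrm{Comm}_G(\Gamma)$ take infinitely many values: if they took only finitely many, the set of such immersed submanifolds would have bounded volume, but a standard argument (using that $\mathrm{Comm}_G(\Gamma) \cap H$ already contains elements moving $\Lambda$ to proper finite-index overgroups or sublattices, via the density of the commensurator of $\Lambda$ in $H$) shows the index $[\Lambda' : \Lambda' \cap \Lambda]$ is unbounded. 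Since immersed submanifolds of the same volume in a fixed $X/\Gamma$ fall into finitely many "shapes" only if volume is bounded — more precisely, for fixed volume $V$ there are only finitely many immersed closed totally geodesic $Y/\Lambda'$ up to the $\Gamma$-action, because each corresponds to a conjugacy class of a lattice of covolume $\le V$ in a fixed $H$ meeting $\Gamma$ — unboundedly many volumes forces infinitely many distinct immersed submanifolds.

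The main obstacle I anticipate is the finiteness statement "for each fixed volume $V$, only finitely many immersed $Y/\Lambda'$ appear in $X/\Gamma$": this requires knowing that lattices of bounded covolume in $H$ intersecting $\Gamma$ in a fixed way form finitely many $\Gamma$-orbits, which follows from Wang's finiteness theorem (finitely many lattices of bounded covolume up to conjugacy when $H$ has no $\mathrm{PSL}_2(\bfR)$ or $\mathrm{PSL}_2(\bfC)$ factors) together with a direct argument in the excluded rank-one factor cases — but in those cases one can instead argue directly that the geodesics/surfaces of bounded length/area in a fixed arithmetic quotient are finite in number. Handling the rank-one factors uniformly, or sidestepping them by invoking the closed-geodesic-counting results already implicit in the Kleinian and Shimura-surface cases cited in the introduction, is the delicate point; everything else is routine commensurator bookkeeping.
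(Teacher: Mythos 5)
Your first paragraph matches the paper's construction: for $c$ in the commensurator $\mathcal{C}(\Gamma)$, the group $\Gamma \cap cHc^{-1}$ is a lattice in $cHc^{-1}$ commensurable with $c\Lambda c^{-1}$, and $c(Y)$ projects to an immersed totally geodesic submanifold of $X/\Gamma$ of the required form; your double-coset criterion for when two such submanifolds coincide is also correct. The divergence, and the gap, is in how you certify that infinitely many double cosets $\Gamma c H$ actually occur. In your first route, the implication ``the index $[\Lambda : \Lambda\cap g_p\Lambda g_p^{-1}]$ grows with $p$, hence the double cosets $\Gamma g_p H$ are distinct'' is unjustified: that index is not an invariant of the double coset $\Gamma g H$ at all (already $g=e$ and $g=\gamma\in\Gamma\smallsetminus H$ lie in the same double coset but give indices $1$ and $[\Lambda:\Lambda\cap\gamma\Lambda\gamma^{-1}]$), so it cannot distinguish double cosets. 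The quantity that equality of double cosets does force to agree is the covolume of $\Gamma\cap gHg^{-1}$ in $gHg^{-1}$, i.e.\ the volume of the immersed submanifold, and that covolume is a \emph{ratio} of two indices (how far $\Gamma\cap gHg^{-1}$ lies above $\Gamma\cap g\Lambda g^{-1}$ versus how far the latter lies below $g\Lambda g^{-1}$); nothing in your sketch controls that ratio. Your second route stumbles on the same point at its first step (unboundedness of the volumes is asserted, not established) and in addition leans on a finiteness theorem for totally geodesic submanifolds of bounded volume which, as you concede, is not available off the shelf in this generality; that is a genuinely delicate statement, not routine bookkeeping.

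The idea you are missing is that the hypothesis that $\Lambda=\Gamma\cap H$ is a lattice in $H$ makes the orbit $\Gamma\cdot Y$ a \emph{closed}, nowhere dense subset of $X$ (equivalently, $\Gamma H$ is closed and nowhere dense in $G$, and likewise $\Gamma c H$ for each $c\in\mathcal{C}(\Gamma)$, since $\Gamma\cap cHc^{-1}$ is again a lattice). A finite union of closed nowhere dense sets has dense complement, so it cannot contain the dense set $\mathcal{C}(\Gamma)\cdot y$ for $y\in Y$. This is exactly how the paper proceeds: having produced $N, N_{c_1},\dots,N_{c_k}$, density of the commensurator against the nowhere dense set $\Gamma\cdot Y\cup\Gamma\cdot c_1(Y)\cup\dots\cup\Gamma\cdot c_k(Y)$ yields $c_{k+1}\in\mathcal{C}(\Gamma)$ with $c_{k+1}(Y)$ outside that union, hence a new immersed submanifold, and one concludes by induction. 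No Hecke operators, strong approximation, volume growth, or bounded-volume finiteness theorems are needed.
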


\begin{proof}
Suppose that $M = X / \Gamma$ and that $N = Y / \Lambda$ admits an immersion as a proper totally geodesic submanifold of $M$, where $\Gamma$ (resp.\ $\Lambda$) is a lattice in $G$ (resp.\ $H$) as in the statement of the lemma. Then $N$ is nowhere dense in $M$, and the lift of $N$ to the universal cover $\widetilde{M} = X$ is the $\Gamma$-orbit of $Y$, which represents a chosen lift. In particular, notice that $Y \cdot \Gamma$ is also nowhere dense in $X$.

Without loss of generality, we can replace $G$ with its adjoint group. Recall that, since $\Gamma$ is arithmetic, the Margulis dichotomy \cite{margulis} then implies that the \emph{commensurator}
\[
\mathcal{C}(\Gamma) = \{g \in G~:~\Gamma\ \textrm{is commensurable with}\ g \Gamma g^{-1} \}
\]
is analytically dense in $G$. Given $c \in \mathcal{C}(\Gamma)$, define
\[
\Lambda_c^0 = c \Lambda c^{-1} \cap \Gamma,
\]
which is a lattice in $c H c^{-1} \subset G$. Then $\Lambda_c = \Gamma \cap c H c^{-1}$ is a sublattice of $\Gamma$ containing $\Lambda_c^0$, and hence determines an immersion of $N_c = c(Y) / \Lambda_c$ as a totally geodesic subspace of $M$, where $c(Y) = Y \cdot c^{-1}$. By construction, $\Lambda_c$ is commensurable with $\Lambda$. From the density of $\mathcal{C}(\Gamma)$ in $G$ and nowhere density of $Y \cdot \Gamma$ in $X$, we can choose $c$ such that $c(Y)$ is not contained in $\Gamma \cdot Y$. It follows immediately that $N_c$ determines a geodesic submanifold of $M$ distinct from $N$. Continuing inductively produces arbitrarily many distinct geodesic subspaces of $M$, which proves the lemma.
\end{proof}

\begin{rmk}
The spaces $N_i$ constructed in the proof of Lemma \ref{lem:LotsOfDistinct} are almost certainly not homeomorphic, and have arbitrarily large volume as $i \to \infty$ (and hence are likely to be quite far from embedded). For some results along these lines, see \cite{MRBianchi, Toledo, MK}.
\end{rmk}




\begin{thebibliography}{11}

\bibitem{BHC}
A.~Borel and Harish-Chandra,
\newblock \emph{Arithmetic subgroups of algebraic groups},
\newblock Ann.~of Math.~(2) \textbf{75} (1962), 485--535.

\bibitem{borel}
A.~Borel,
\newblock \emph{Commensurability classes and volumes of hyperbolic {$3$}-manifolds},
\newblock Ann.~Scuola Norm.~Sup.~Pisa Cl.~Sci.~(4) \textbf{8}(1) (1981), 1--33.

\bibitem{CS}
T.~Chinburg and M.~Stover,
\newblock \emph{Geodesic curves on Shimura surfaces},
\newblock \url{http://arxiv.org/abs/1506.03299}.

\bibitem{MK}
V.~Koziarz and J.~Maubon.
\newblock \emph{On the equidistribution of totally geodesic submanifolds in locally symmetric spaces and application to boundedness results for negative curves and exceptional divisors}
\newblock \url{http://arxiv.org/abs/1407.6561}

\bibitem{MRFuchsian}
C.~Maclachlan and A.~W.~Reid,
\newblock \emph{Commensurability classes of arithmetic Kleinian groups and their Fuchsian subgroups},
\newblock Math.~Proc.~Cambridge Philos.~Soc.~\textbf{102} (1987), no.~2, 251--257.

\bibitem{MRBianchi}
C.~Maclachlan and A.~W.~Reid,
\newblock \emph{Parametrizing Fuchsian subgroups of the Bianchi groups}.
\newblock Canad.\ J.\ Math.\ \textbf{43} (1991), no.\ 1, 158--181.

\bibitem{MR}
C.~Maclachlan and A.~W.~Reid,
\newblock \emph{The Arithmetic of Hyperbolic 3--Manifolds}.
\newblock Graduate Texts in Mathematics \textbf{219}, Springer-Verlag, 2003.

\bibitem{margulis}
G.~A.~Margulis,
\newblock \emph{Discrete subgroups of semisimple Lie groups}.
\newblock Springer-Verlag, 1991.

\bibitem{McR}
D. B. McReynolds, 
\newblock \emph{Geometric spectra and commensurability}.
\newblock Canad.~J.~Math., \textbf{67} (2015), no.~1, 184--197.

\bibitem{Milne}
J.~Milne,
\newblock \emph{Introduction to Shimura varieties}.
\newblock \url{http://jmilne.org/math/xnotes/svi.pdf}

\bibitem{Toledo}
M.~M\"oller and D.~Toledo,
\newblock \emph{Bounded negativity of self-intersection numbers of Shimura curves in Shimura surfaces}.
\newblock Algebra Number Theory \textbf{9} (2015), no.\ 4, 897--912.

\bibitem{reiner}
I.~Reiner,
\newblock \emph{Maximal orders}.
\newblock London Mathematical Society Monographs, 1975.

\bibitem{Vigneras}
M.-F.~Vign\'eras,
\newblock \emph{Arithm\'etique des alg\`ebres de quaternions}.
\newblock Lecture Notes in Mathematics, \textbf{800}, Springer, 1980.

\end{thebibliography}
\end{document}